\title{Kernels of $L$-functions of cusp forms}
\author{Nikolaos Diamantis and Cormac O'Sullivan}
\begin{document}

\maketitle

\def\s#1#2{\langle #1 , #2 \rangle}

\def\H{{\mathbb H}}
\def\F{{\frak F}}
\def\C{{\mathbb C}}
\def\R{{\mathbb R}}
\def\Z{{\mathbb Z}}
\def\Q{{\mathbb Q}}
\def\N{{\mathbb N}}
\def\B{{\mathbb B}}
\def\G{{\Gamma}}
\def\GH{{\G \backslash \H}}
\def\g{{\gamma}}
\def\L{{\Lambda}}
\def\ee{{\varepsilon}}
\def\K{{\mathcal K}}
\def\Re{\mathrm{Re}}
\def\Im{\mathrm{Im}}
\def\PSL{\mathrm{PSL}}
\def\SL{\mathrm{SL}}
\def\Vol{\operatorname{Vol}}
\def\li{\operatorname{Li}}
\def\sgn{\operatorname{sgn}}
\def\coh{{\mathcal C}}
\def\doh{{\mathcal D}}
\def\lqs{\leqslant}
\def\gqs{\geqslant}
\def\ab{\mathring}

\def\ca{{\frak a}}
\def\cb{{\frak b}}
\def\cc{{\frak c}}
\def\cd{{\frak d}}
\def\ci{{\infty}}

\def\sa{{\sigma_\frak a}}
\def\sb{{\sigma_\frak b}}
\def\sc{{\sigma_\frak c}}
\def\sd{{\sigma_\frak d}}
\def\si{{\sigma_\infty}}


\newtheorem{theorem}{Theorem}[section]
\newtheorem{lemma}[theorem]{Lemma}
\newtheorem{prop}[theorem]{Proposition}
\newtheorem{cor}[theorem]{Corollary}

\renewcommand{\labelenumi}{(\roman{enumi})}

\numberwithin{equation}{section}

\bibliographystyle{plain}

\begin{abstract}\noindent
We give a new expression for the inner product of two kernel functions associated to a cusp form. Among other applications, it yields an extension of a formula of Kohnen and Zagier, and another proof of Manin's Periods Theorem. Cohen's representation of these kernels as series is also generalized.
\end{abstract}

\section{Introduction}
\subsection{Background} Let
\begin{equation}\label{fourierexp}
f(z)=\sum_{n=1}^\infty a_f(n)  e^{2 \pi i n z}
\end{equation}
be  in $S_{k}(\Gamma)$, the $\C$-vector space of holomorphic, weight $k$ cusp forms for the modular group $\G=\PSL_2(\Z)$.
 The $L$-function of $f$ is
\begin{equation}\label{lfs}
L(f,s) := \sum_{n=1}^\infty \frac{a_f(n)}{n^s}
\end{equation}
defined for $\Re(s)$ large. It is an Euler product when $f$ is an eigenfunction of all Hecke operators $T_m$. Let $\mathcal B_k$ be the unique basis of $S_{k}$ consisting of such Hecke eigenforms, normalized to have $a_f(1)=1$. The completed $L$-function is
\begin{equation}\label{lfs2}
L^*(f,s):=(2\pi)^{-s}\G(s) L(f,s) = \int_0^\infty f(iy) y^{s-1} \, dy
\end{equation}
and is analytic for all $s \in \C$.
 For integers $n$ with $0 \lqs n \lqs k-2$ the $n$th  period of $f$ is
$$
r_n(f):=L^*(f,n+1).
$$
A celebrated result of Manin, his Periods Theorem \cite{Ma1}, states that the ratios of all the periods for $n$ even (and separately for $n$ odd) lie in the field $K_f$ generated by the coefficients $a_f(n)$ when $f \in \mathcal B_k$. His proof uses the Eichler-Shimura isomorphism and a computation involving continued fractions. Shimura extends Manin's result to all Hecke congruence groups with a different proof \cite{Sh}.
Zagier in \cite[$\S 5$]{Za3}  provides another route to the Periods Theorem. This proof  relies on the Rankin-Cohen bracket (\ref{rcb}) and extending an identity of Rankin (\ref{zaggg}). We give a new proof of Manin's Periods Theorem in section \ref{manin} by extending a result of Kohnen and Zagier in \cite{KZ} which we describe next.
With the Petersson inner product
\begin{equation}\label{pip}
 \s{f}{g} :=\int_{\GH} y^{k} f(z) \overline{g(z)} \, d\mu z
\end{equation}
 there must exist $R_n \in S_k$ such that
 \begin{equation}\label{sfrn}
\s{f}{R_n}=r_n(f)
\end{equation}
  for all $f \in S_k(\G)$ and every $0 \lqs n \lqs k-2$.
Kohnen and Zagier show that, remarkably, for $m \not\equiv n \mod 2$, $\s{R_m}{R_n}$ is a rational number given by an explicit formula involving the Bernoulli numbers. To state it, for $n \in \Z$ put
\begin{equation}\label{rho}
\rho(2n):=\begin{cases} (-1)^{n+1} B_{2n}/(2n)!&  n \gqs 0 \\
 0 & n<0 \end{cases}
\end{equation}
so that
$\rho(0)=-1$ and $\rho(2n)>0$ for $n>0$.
Set $\tilde m:=k-2-m$ and  $\tilde n:=k-2-n$.
\begin{theorem} \cite{KZ} \label{rmrnsimple}
For integers $m,n$ of opposite parity with $0 < m,n < k-2$
\begin{eqnarray*}
2^{2-k}(k-2)!\bigl \langle R_m, R_n \bigr \rangle & = &
 \rho(m - \tilde n + 1) m! n!
 +
 \rho(-m +  \tilde n + 1)  \tilde m! \tilde n! \nonumber
\\ &  & +
(-1)^{k/2} \rho(m -  n + 1) m! \tilde n!
  +
(-1)^{k/2} \rho(-m + n + 1)  \tilde m! n! .
\end{eqnarray*}
\end{theorem}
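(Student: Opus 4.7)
The starting point is to fold both kernels into a single one-variable integral. Specializing $\s{f}{R_n}=r_n(f)$ at $f=R_m$ gives
$$
\s{R_m}{R_n}=r_n(R_m)=\int_0^{\infty} R_m(iy)\,y^n\,dy,
$$
so the task reduces to evaluating $R_m$ along the imaginary axis and then taking its $n$th Mellin moment. The $m\leftrightarrow n$ symmetry of the final answer is recovered only at the end, via the Fricke involution $y\mapsto 1/y$, which explains why the four terms of the statement organize themselves into two pairs.

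Next I would invoke an explicit series representation of $R_m$, in the spirit of Cohen's original formula (whose generalization is one of the announced outputs of this paper). Schematically, $R_m$ is expressed as
$$
c_{k,m}\sum_{\g\in\G_\infty\backslash\G}(\g z)^{-m-1}j(\g,z)^{-k},
$$
suitably symmetrized so that it lies in $S_k$. Substituting this into the integral and performing the standard Rankin--Selberg-type unfolding turns $\int_0^\infty R_m(iy)\,y^n\,dy$ into an integral over $(0,\infty)$ of an elementary rational function of $y$. Splitting this integral at $y=1$ and changing variables by $y\leftrightarrow 1/y$ on one piece --- which picks up the factor $i^k=(-1)^{k/2}$ from the weight-$k$ action of $\binom{0\ -1}{1\ 0}$ --- produces four boundary contributions, which will correspond to the four terms in the statement.

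Each of the four Mellin integrals is then evaluated with the classical identity
$$
\int_0^{\infty}\frac{y^{s-1}}{e^{2\pi y}-1}\,dy=(2\pi)^{-s}\G(s)\zeta(s),
$$
combined with Euler's formula $\zeta(2j)=(-1)^{j+1}(2\pi)^{2j}B_{2j}/(2(2j)!)$. After cancellation of the $\G$- and $(2\pi)$-factors, exactly the numbers $\rho(2j)$ of (\ref{rho}) appear, and the factorials $m!,\tilde m!,n!,\tilde n!$ in the statement emerge as the $\G$-values supplied by the Mellin transforms of the monomials produced in the unfolding step; the assignment of $\tilde m,\tilde n$ to the Fricke-transformed pieces comes from the shift $s\mapsto k-1-s$ on the Mellin variable.

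The main obstacle is twofold. First, the unfolded integrals are only conditionally convergent near $0$ and $\infty$, and the hypothesis $m\not\equiv n\pmod 2$ is precisely what forces the potentially divergent boundary contributions to cancel; in the equal-parity case an Eisenstein-series term would have to be subtracted off and the resulting formula is genuinely different. Second, the combinatorial bookkeeping --- tracking signs from $i^k$, the normalizing constants buried inside Cohen's representation, and the factor $2^{2-k}(k-2)!$ in front of the inner product --- must be done carefully to produce the precise symmetric form of the stated identity.
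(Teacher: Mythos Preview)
Your approach differs substantially from the paper's route. The paper does not attack $\langle R_m,R_n\rangle$ via a one-variable Mellin integral at all; it obtains Theorem~\ref{rmrnsimple} as the specialization $l=1$, $s=m+1$, $w=n+1$ of the more general Theorem~\ref{rmrn2}. That theorem is proved by first establishing Theorem~\ref{dsdw}, which rewrites $\langle T_l\doh_k(\cdot,s),\doh_k(\cdot,\bar w)\rangle$ as an inner product of the Poincar\'e series $P_l$ against a product $y^{-k/2}E^*_{k_1}(\cdot,u)E^*_{k_2}(\cdot,v)$ of two non-holomorphic Eisenstein series. Choosing $k_1,k_2$ so that both Eisenstein series have only non-negative Fourier modes (Lemma~\ref{1swk}), the inner product is then evaluated by holomorphic projection (Lemma~\ref{holproj} and Propositions~\ref{psi111}, \ref{psi333}, \ref{psi222}); the Bernoulli numbers enter through the constant terms $\theta_k(h)$ of the Eisenstein expansions, not through any $\int_0^\infty y^{s-1}/(e^{2\pi y}-1)\,dy$.

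Your sketch is closer in spirit to Kohnen--Zagier's original first proof via Cohen's kernel $\coh_k$, which the paper cites but does not follow. However, there is a genuine gap at your ``Rankin--Selberg-type unfolding'' step. The identity $\langle R_m,R_n\rangle=\int_0^\infty R_m(iy)\,y^n\,dy$ is a one-dimensional Mellin integral over the imaginary axis, and substituting Cohen's series at $z=iy$ produces an infinite sum of terms of the shape $(aiy+b)^{-m-1}(ciy+d)^{m+1-k}$ indexed by $\gamma=\left(\begin{smallmatrix}a&b\\c&d\end{smallmatrix}\right)\in\Gamma$; this is not ``an elementary rational function of $y$'', and there is no fundamental-domain integral present to which Rankin--Selberg unfolding could apply. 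The passage from this $\Gamma$-sum to a closed form in Bernoulli numbers is precisely the heart of the argument, and your outline does not supply a mechanism for it. The Mellin identity you quote would help only if you already possessed the Fourier coefficients of $R_m$ in closed form, but computing those is essentially equivalent to the theorem itself. Your remarks on the Fricke symmetry $y\leftrightarrow 1/y$, the factor $(-1)^{k/2}$, and the role of the parity hypothesis are all correct, but they are peripheral to the missing step.
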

For simplicity we have omitted the cases when $m$ or $n$ equals $0$ or $k-2$. See Theorem \ref{rmrn} for the complete statement.

\subsection{Statement of main results}
We further this study to non-critical values by focusing on
the kernel function of $L^*(f, s)$ rather than $L^*(f, n)$ with $n$ a
critical value only. One of our motivating questions was to what extent formulas, such as that for $\s{R_m}{R_n}$ generalize. Indeed, extending (\ref{sfrn}), for every $s \in \C$ there must exist $\doh_k(z,s) \in S_k$ such that
\begin{equation}\label{doh}
\s{\doh_k(\cdot,s)}{f}=L^*(\overline{f},s)
\end{equation}
for all $f \in S_k$. Clearly, $R_n = \doh_k(\cdot,n+1)$. Our first main result shows that the Petersson scalar product of two
values of the a priori unknown kernel $\mathcal D_k(\cdot, s)$ can be
explicitly interpreted in terms of familiar objects.
With the Poincar\'e series
\begin{equation}\label{poincare}
P_l(z):=\sum_{\g \in \G_\infty \backslash \G} \frac {e^{2\pi i l z}} {
   j(\g,z)^k} \ \ \ \in S_k
\end{equation}
for $j((\smallmatrix a & b \\ c & d \endsmallmatrix), z):=cz+d$  and  non-holomorphic Eisenstein series
 \begin{equation*}
E^*_k(z,s):= \pi^{-s} \G(s+|k|/2) \zeta(2s) \sum_{\g \in \G_\ci \backslash \G} \Im (\g z)^s \left(\frac{j(\g, z)}{|j(\g, z)|}\right)^{-k}
\end{equation*}
 we prove the following.
\begin{theorem} \label{dsdw}
For all $s,w \in \C$
\begin{equation} 2 \cdot \pi^{k/2} \G(k-1) \Bigl\langle T_l \doh_k(\cdot,s), \doh_k(\cdot,\overline w)\Bigr\rangle =
(-1)^{k_2/2} (4\pi l)^{k-1}
\Bigl\langle P_l, y^{-k/2}E^*_{k_1}(\cdot, \bar u)E^*_{k_2}(\cdot, \bar v)\Bigr\rangle.
\label{deqn}
\end{equation}
Here  $k_1,k_2$ are any non-negative even integers with $k_1+k_2=k$ and
\begin{equation}\label{uvsw}
    2u=s+w-k+1,\quad 2v=-s+w+1.
\end{equation}
\end{theorem}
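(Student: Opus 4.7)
My plan is to reduce both sides of \eqref{deqn} to the common Rankin--Selberg expression $\sum_{f\in\mathcal{B}_k}a_f(l)L^*(f,s)L^*(f,w)/\s{f}{f}$ and match them.

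\emph{Spectral reduction of the left-hand side.} The defining property \eqref{doh}, together with the self-adjointness of $T_l$ on $S_k$ and the reality of the Fourier coefficients of elements of $\mathcal{B}_k$ in level one, gives the spectral expansion $\doh_k(z,s)=\sum_{f\in\mathcal{B}_k}\s{f}{f}^{-1}L^*(f,s)f(z)$. Since $T_lf=a_f(l)f$, the left-hand side of \eqref{deqn} becomes $2\pi^{k/2}\Gamma(k-1)\sum_{f}a_f(l)L^*(f,s)L^*(f,w)/\s{f}{f}$.

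\emph{Reduction of the right-hand side.} Because $k_1+k_2=k$, the product $E^*_{k_1}(z,\bar u)E^*_{k_2}(z,\bar v)$ carries the weight-$k$ factor $(j/|j|)^{-k}$, so that $g(z):=y^{-k/2}E^*_{k_1}(z,\bar u)E^*_{k_2}(z,\bar v)$ transforms with the holomorphic weight-$k$ factor $j^{-k}$, and $y^kP_l\overline{g}$ is $\G$-invariant. Unfolding $P_l$ and spectrally expanding the cuspidal projection of $g$ in $S_k$ yields $\s{P_l}{g}=\frac{\Gamma(k-1)}{(4\pi l)^{k-1}}\sum_{f\in\mathcal{B}_k}a_f(l)\s{f}{g}/\s{f}{f}$. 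Thus \eqref{deqn} is equivalent to the existence of a constant $C$ with $\s{f}{g}=C\,L^*(f,s)L^*(f,w)$ for every $f\in\mathcal{B}_k$ and $C=2\pi^{k/2}(-1)^{k_2/2}$.

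\emph{Rankin--Selberg identification.} For each $f\in\mathcal{B}_k$ I intend to unfold $E^*_{k_2}$ in $\s{f}{g}$ via its Eisenstein series representation. Integration in $x$ against the Fourier expansion of $E^*_{k_1}$ (whose non-constant coefficients are explicit multiples of $K$-Bessel functions), combined with the classical Mellin transform of $y^{a-1}e^{-by}K_\nu(cy)$, will produce a product of two Mellin transforms of $f$ that combine into $L^*(f,s)L^*(f,w)$ precisely under the substitution \eqref{uvsw}. The substitution \eqref{uvsw} is forced by the single beta integral coupling the $u$-- and $v$--calculations.

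\emph{Main obstacle.} The substantive task is tracking the $\pi$, $\Gamma$, $\zeta$ and sign factors through the two Mellin computations to pin down $C=2\pi^{k/2}(-1)^{k_2/2}$. The sign $(-1)^{k_2/2}$ should originate from $\overline{(j/|j|)^{-k_2}}=(j/|j|)^{k_2}$ together with $i^{k_2}$-factors appearing in the Mellin transform of the $K$-Bessel functions entering the Fourier expansion of $E^*_{k_2}$. Once the identity is established in a region of absolute convergence (for instance $\Re u,\Re v\gg 0$), meromorphic continuation extends it to all $s,w\in\C$.
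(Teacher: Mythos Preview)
Your reduction is correct and matches the paper's skeleton: both sides are reduced to the single identity
\[
\bigl\langle f,\; y^{-k/2}E^*_{k_1}(\cdot,\bar u)E^*_{k_2}(\cdot,\bar v)\bigr\rangle
=(-1)^{k_2/2}\,2\pi^{k/2}\,L^*(f,s)L^*(f,w)\qquad(f\in\mathcal B_k),
\]
which is exactly Proposition~\ref{iprodprop}. Your spectral expansion of $\doh_k$ and your identification $\s{P_l}{g}=\tfrac{\Gamma(k-1)}{(4\pi l)^{k-1}}\sum_f a_f(l)\s{f}{g}/\s{f}{f}$ are the same two moves the paper makes (the latter is phrased there as $T_l\mathcal P=P_l(4\pi l)^{k-1}/\Gamma(k-1)$ for $\mathcal P=\sum_f f/\s{f}{f}$).

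Where you diverge is in the proof of the Rankin--Selberg identity itself. You propose to unfold $E^*_{k_2}$ directly at general even weight $k_2$ and integrate against the Fourier expansion of $E^*_{k_1}$, then extract $L^*(f,s)L^*(f,w)$ from a Mellin transform. The paper instead first proves the identity only for $(k_1,k_2)=(k,0)$ (unfolding $E^*_k$ and using the \emph{weight-zero} Fourier expansion, where the coefficients are genuine $K$-Bessel functions and the Mellin integral is the classical one), and then passes to arbitrary $(k_1,k_2)$ by the one-line Maass-operator computation of Lemma~\ref{label}, which shows that each step $k_2\mapsto k_2-2$ changes the inner product by a factor $-1$. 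The sign $(-1)^{k_2/2}$ thus appears transparently as the number of Maass shifts. Your route is viable, but two points deserve care: for $k_1>0$ the non-constant Fourier coefficients of $E^*_{k_1}$ are Whittaker functions $W_{\pm k_1/2,\,u-1/2}$, not bare $K$-Bessel functions, so the Mellin step is the hypergeometric integral $\int_0^\infty e^{-y}W_{\kappa,\mu}(y)y^{\alpha-1}\,dy$ rather than the $K$-Bessel Mellin; and your stated source of the sign (``$i^{k_2}$-factors \dots in the Fourier expansion of $E^*_{k_2}$'') cannot be right, since $E^*_{k_2}$ is the series you have unfolded and its Fourier expansion never enters. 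If you pursue your direct computation, the sign will in fact emerge from the Whittaker Mellin evaluation for $E^*_{k_1}$ combined with the gamma-factor bookkeeping in $\theta_{k_2}(v)$; the paper's Maass-shift argument sidesteps all of this.
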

Including the operator $T_l$ on the left of (\ref{deqn}) is very natural,  giving  a description of the Hecke action and, as we shall show in
section \ref{arit}, there are interesting arithmetic applications.
We use Theorem \ref{dsdw} to prove an extension of Theorem \ref{rmrnsimple}.
 With $s,w \in \Z_{\gqs 0}$  set
\begin{equation}\label{zdef}
Z_{s,w}(x):=
  (-1)^{(s+w+1)/2}  \binom{k-2}{s-1}^{-1}  \sum_{r=0}^{k-1} (-x)^r \binom{k-1-w}{r} \binom{w-1+r}{k-1-s}.
\end{equation}
\begin{theorem} \label{rmrn2}
For $4 \lqs k \in 2\Z$ and integers $s,w$ of opposite parity satisfying $1\lqs s,w \lqs k-1$
\begin{multline}
  (k-2)! 2^{2-k}\Bigl\langle T_l \doh_k(\cdot,s), \doh_k(\cdot, w)\Bigr\rangle  =  \sigma_{2v-1}(l)
  \Bigl[\rho(2u)l^{k-1-w}  \G(s)\G(w)
     +
     \rho(2-2u)l^{s-1} \G(k-s)\G(k-w)\Bigr] \\
   +  (-1)^{k/2} \sigma_{2u-1}(l)
  \Bigl[ \rho(2v)l^{k-1-w}  \G(k-s)\G(w)
     +
     \rho(2-2v)l^{k-1-s} \G(s)\G(k-w)\Bigr] \\
     +
     2 (-1)^{k/2} (k-2)! l^{k-1-w} \sum_{n=1}^{l-1} \sigma_{2u-1}(n)\sigma_{2v-1}(l-n)  Z_{s,w}(n/l)\\
   - \frac{\sigma_{k-1}(l)}{(k-1) \rho(k)} \Bigl[ \left(\delta_{w,1}(-1)^{(k-s)/2} + \delta_{w,k-1}(-1)^{s/2}\right)\G(s)\G(k-s)\rho(s)\rho(k-s) \\
+ \left(\delta_{s,1}(-1)^{(k-w)/2} + \delta_{s,k-1}(-1)^{w/2}\right)\G(w)\G(k-w)\rho(w)\rho(k-w)\Bigr]. \label{bigformula}
\end{multline}
\end{theorem}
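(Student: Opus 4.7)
The plan is to apply Theorem \ref{dsdw} so as to reduce the identity to an explicit evaluation of $\langle P_l, y^{-k/2} E^*_{k_1}(\cdot, \bar u) E^*_{k_2}(\cdot, \bar v)\rangle$ and then to compute this inner product by unfolding the Poincar\'e series $P_l$. Unfolding collapses the integral over $\G \backslash \H$ to an integral over the strip $[0,1]\times (0,\infty)$; integration in $x$ extracts the $l$-th Fourier coefficient of the product $E^*_{k_1}(z,u) E^*_{k_2}(z,v)$, and the remaining integral in $y$ is a Mellin transform against $e^{-2\pi l y}y^{k/2-2}$.

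Next, insert the standard Fourier expansion of each completed non-holomorphic Eisenstein series $E^*_{k_i}(z,\cdot)$, namely an $x$-constant part consisting of two monomials in $y$ with coefficients built from completed Riemann zetas, plus a Whittaker-function series $\sum_{n \neq 0} c_n(s) W_n(y) e^{2\pi i n x}$; multiply the two expansions and extract the $l$-th Fourier coefficient. This produces exactly four types of contributions, one for each block of (\ref{bigformula}). The $\sigma_{2v-1}(l)$ block arises by pairing the $x$-constant part of $E^*_{k_1}$ with the $n=l$ term of $E^*_{k_2}$; the $\sigma_{2u-1}(l)$ block is the symmetric pairing; the convolution $\sum_{n=1}^{l-1}\sigma_{2u-1}(n)\sigma_{2v-1}(l-n)$ arises from pairing non-constant frequencies of $E^*_{k_1}$ and $E^*_{k_2}$ that sum to $l$; and the $\delta$-terms arise from the (constant)$\times$(constant) pairing, which contributes only when one of the four monomial products accidentally matches the Mellin kernel $e^{-2\pi l y} y^{k/2-2}$ at a boundary value of $s$ or $w$.

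The hypothesis that $s,w$ are integers of opposite parity with $1 \lqs s,w \lqs k-1$ ensures that $u=(s+w-k+1)/2$ and $v=(-s+w+1)/2$ are integers, so that the Whittaker $W$-functions appearing in $E^*_{k_1}$ and $E^*_{k_2}$ collapse to finite expressions $e^{-2\pi|n|y}\sum_j a_j y^{-j}$ with explicit binomial coefficients. The Mellin integrals in $y$ against $e^{-2\pi l y} y^{k/2-2}$ then evaluate in closed form to products of gamma factors; the convolution contribution reorganizes, via a binomial identity, into precisely the polynomial $Z_{s,w}(n/l)$ of (\ref{zdef}). The $\rho$-values appearing in the $\sigma_{2u-1}(l)$ and $\sigma_{2v-1}(l)$ blocks come from evaluating the completed-zeta constants in the Fourier expansion of $E^*_{k_i}$ at the even integer arguments $2u$, $2-2u$, $2v$, $2-2v$.

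The main obstacle I anticipate is the algebraic bookkeeping needed to match the closed-form convolution coefficient with the explicit $Z_{s,w}(x)$ of (\ref{zdef}); this will hinge on a binomial-sum identity obtained by expanding the Whittaker polynomials and performing the $y$-integral term by term. Equally delicate is the analysis of the boundary cases $s, w \in \{1,k-1\}$: at those values one of the monomials in the (constant)$\times$(constant) product has exponent exactly matching the Mellin kernel and yields the genuine $\delta$-term contribution weighted by $\rho(k)^{-1}$, whereas away from these values the corresponding pairings vanish. Signs and the $(-1)^{k_2/2}$ prefactor from Theorem \ref{dsdw} must be tracked throughout; the freedom in the choice of the split $k_1+k_2=k$ supplies a consistency check since the left-hand side is independent of it. Specializing to $l=1$ and $1 < s,w < k-1$ makes both the $Z_{s,w}$ sum and the $\delta$-terms vanish, and should recover Theorem \ref{rmrnsimple} as a final sanity check.
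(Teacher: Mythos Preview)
Your overall plan---apply Theorem \ref{dsdw}, unfold $P_l$, and read off the $l$-th Fourier coefficient of the product $E^*_{k_1}E^*_{k_2}$---is exactly the paper's route. Two points, however, are not right as stated and would stop the argument.

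First, the $\delta$-terms do \emph{not} come from the (constant)$\times$(constant) pairing. Those four monomials carry no $x$-dependence and therefore contribute only to the $0$-th Fourier coefficient of $F(z)=y^{-k/2}E^*_{k_1}(z,u)E^*_{k_2}(z,v)$, never to $F_l(y)$ for $l\gqs 1$. What actually happens is that at the boundary values $s,w\in\{1,k-1\}$ one of those monomials has exponent $0$, so $F(z)$ fails to decay as $y\to\infty$ and the unfolding lemma (Lemma \ref{holproj}) cannot be applied directly. The fix is to subtract $\lambda(s,w)E_k$, where $\lambda(s,w)$ is the offending constant term; since $\langle E_k,P_l\rangle=0$ this leaves the inner product unchanged but now the \emph{corrected} $l$-th Fourier coefficient acquires the extra piece $-\lambda(s,w)\cdot\bigl(\text{$l$-th coefficient of }E_k\bigr)$, which by (\ref{ek}) involves $\sigma_{k-1}(l)$ and $B_k$. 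That is the origin of the block with $\sigma_{k-1}(l)/\rho(k)$.

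Second, you need to say why the convolution is the \emph{finite} sum $\sum_{n=1}^{l-1}$ rather than a sum over all $n\in\Z$ with $n\neq 0,l$. Integrality of $u,v$ alone only makes the Whittaker functions polynomial-times-exponential; it does not kill the negative Fourier modes of $E^*_{k_i}$. The paper exploits the freedom in the split $k=k_1+k_2$ to choose $k_1,k_2$ so that $1-k_1/2\lqs u\lqs k_1/2$ and $1-k_2/2\lqs v\lqs k_2/2$ (Lemma \ref{1swk}); by Theorem (\ref{mnk}) and (\ref{vanishinga}) this forces the $m<0$ terms of both Eisenstein series to vanish, so the only frequency pairs summing to $l$ are $(n,l-n)$ with $1\lqs n\lqs l-1$. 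This same specific choice (take $k_1/2=u^*+1$) is also what makes the double sum defining the convolution coefficient collapse to the single sum $Z_{s,w}(x)$ via the binomial identity (\ref{combin}). So the split is not a mere ``consistency check'' but a genuine ingredient of the proof.
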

For $s=m+1$, $w=n+1$, this gives an explicit  expression for the rational number $\s{T_l R_m}{R_n}$.
As described below, statements equivalent to Theorem \ref{rmrn2} have already appeared in the literature, using a variety of proof techniques.
 Our proof is new
 and relies on
  choosing  $k_1$ and $k_2$ so that the series $E^*_{k_1}(z, \bar u)$ and $E^*_{k_2}(z, \bar v)$ above only have terms in their Fourier expansions with $e^{2\pi i nx}$ for $n\gqs 0$. In this way we obtain finite sums from the right side of (\ref{deqn}). This is carried out in sections \ref{eisens} and \ref{three}. In section \ref{manin},  as an application of Theorem \ref{rmrn2}, we prove Manin's Periods Theorem with similar methods to those of \cite{Sh, Za3, KZ}.

\vskip 3mm
The advantage of using Theorem \ref{dsdw} is that it allows us  to consider $\doh_k(z,s)$, $\doh_k(z, w)$ and the inner product $\bigl\langle T_l \doh_k(\cdot,s), \doh_k(\cdot, w)\bigr\rangle$ at other interesting values of $s$ and $w$. This is the subject of
current work. For example,  $\s{R_n}{R_n}$ is related by (\ref{deqn}) to Eisenstein series with $u,v$ half-integral. Further, since both sides of (\ref{deqn}) are analytic in $s$ and $w$ we may study derivatives of $L$-series. Finally, in relation to Theorem \ref{dsdw}, we speculate that it might be used to uncover weaker forms of the Periods Theorem for values outside the critical strip.

\vskip 3mm
Kohnen and Zagier's first  proof of Theorem \ref{rmrnsimple} in \cite[Theorem 1, p 203]{KZ} uses a  holomorphic kernel due to Cohen:
\begin{equation}\label{cohen}
\coh_k(z,s):=\sum_{\g \in \G} \frac 1 {(\g z)^{s} j(\g,z)^k}
\end{equation}
with $z$ in the upper half plane $\H$ and $s$ taking integer values
between $2$ and $k-2$. Up to a constant, (\ref{cohen}) is $R_{s-1}$. As with $\doh_k(z,s)$, we may examine $\coh_k(z,s)$ as $s=\sigma+it$ ranges over all of $\C$.  With $z
\in \H$ and $s \in \C$, the expression $z^s$ is well defined by
\begin{equation}\label{zs}
z^s=e^{s \log z},
\end{equation}
where we take the principal branch of the log. For any fixed $s \in \C$,
$z^s$ is a holomorphic function of $z$ in $\H$. We prove the following result.

\begin{theorem}
The series $\coh_k(z,s)$ defined by (\ref{cohen}) is absolutely convergent for $\sigma \in (1,k-1)$. The convergence is uniform for $\sigma$ in compact subsets of $(1,k-1)$. For each $s$ with $\sigma \in (1,k-1)$ we have $\coh_k(z,s) \in S_k(\G)$, the space of holomorphic, weight $k$ cusp forms for $\G$.
\end{theorem}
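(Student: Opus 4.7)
The plan is to establish the three claims---absolute and uniform convergence on compacta, holomorphy, and membership in $S_k(\G)$---by a single Eisenstein-type estimate followed by routine verifications.

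First I would replace each term by its modulus. Since $\g z \in \H$, the principal branch gives $\arg(\g z) \in (0,\pi)$, so $|(\g z)^{-s}| \lqs e^{\pi |\Im s|}|\g z|^{-\sigma}$ with $\sigma:=\Re s$; using $|\g z|=|az+b|/|cz+d|$ this becomes
\[\bigl|(\g z)^{-s} j(\g,z)^{-k}\bigr| \ll_s |az+b|^{-\sigma}|cz+d|^{\sigma-k}.\]
The central step is to parametrize $\G$ by its \emph{first column} rather than its bottom row: for each primitive pair $(a,c) \in \Z^2$ (up to sign) pick $(b_0,d_0)\in\Z^2$ with $ad_0-b_0c=1$, so the matrices with that first column are exactly $\bigl(\begin{smallmatrix}a & b_0+ta \\ c & d_0+tc\end{smallmatrix}\bigr)$ for $t\in\Z$. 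In the generic case $ac\neq 0$, factoring out $|a|$ and $|c|$ turns the inner sum over $t$ into
\[|a|^{-\sigma}|c|^{\sigma-k}\sum_{t \in \Z}|z+\alpha+t|^{-\sigma}|z+\beta+t|^{\sigma-k}, \qquad \alpha:=b_0/a,\ \beta:=d_0/c.\]
Since $\sigma-k<0$ and $|z+\beta+t|\gqs y=\Im z$, I would bound the second factor uniformly by $y^{\sigma-k}$, and the remaining sum $\sum_t|z+\alpha+t|^{-\sigma}$ by the elementary estimate $y^{-\sigma}+C_\sigma$ (valid for $\sigma>1$, by isolating the integer $t$ nearest $-\Re z-\alpha$). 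This gives an inner sum of size $O(y^{-k}+y^{\sigma-k})$, after which the outer sum over primitive $(a,c)$ is majorized by $4\,\zeta(\sigma)\,\zeta(k-\sigma)$, finite precisely when $1<\sigma<k-1$. The degenerate columns $a=0$ and $c=0$ reduce directly to $\sum_d|z\pm d|^{\sigma-k}$ and $\sum_b|z+b|^{-\sigma}$, convergent in the same strip. Every bound is manifestly uniform for $z$ in compacta of $\H$ (where $y$ is bounded below) and for $\sigma$ in compacta of $(1,k-1)$.

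The remaining assertions should then follow quickly. Holomorphy of $\coh_k(\cdot,s)$ on $\H$ is immediate from the uniform absolute convergence of a series of holomorphic summands. Weight-$k$ modularity under any $\g' \in \G$ is a reindexing argument based on the cocycle identity $j(\g\g',z)=j(\g,\g'z)j(\g',z)$. For cuspidality at the unique cusp $\infty$ of $\G$, the same three-case estimate yields $|\coh_k(z,s)|=O(y^{-k})+O(y^{\sigma-k})+O(y^{\sigma+1-k})+O(y^{1-\sigma})$ as $y\to\infty$, each term vanishing in the strip $1<\sigma<k-1$.

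The hard part is capturing the full critical strip $(1,k-1)$ rather than only a proper subinterval. If one parametrizes $\G$ by its bottom row---the natural choice from the point of view of Poincar\'e series---then applying the analogue of the $t$-sum bound only yields convergence for $\sigma<(k-2)/2$, because the amplification $\Im(\g_R z)^{-\sigma}=|cz+d|^{2\sigma}/y^\sigma$ overwhelms the Eisenstein exponent $k-\sigma$. Switching to the first-column parametrization is what cleanly separates the two exponents into the independent zeta factors $\zeta(\sigma)$ and $\zeta(k-\sigma)$ and produces the sharp strip.
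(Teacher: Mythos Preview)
Your argument is correct and takes a genuinely different route from the paper's. The paper treats $\coh_k(z,s)$ as the special case $\tau=0$ of the series $\Omega_\ca(z,\tau;s,k)$ defined for an arbitrary Fuchsian group of the first kind, and obtains convergence in the full strip (Proposition~\ref{convergence}, case (iii)) via a mean-value inequality over hyperbolic balls together with a geometric lemma (Lemma~\ref{tech}) bounding how close orbit points can approach a cusp; cuspidality (Proposition~\ref{gencusp}) is then read off from the resulting polynomial bound on $j(\sb,z)^{-k}\Omega_\ca(\sb z,\tau;s,k)$ by inspecting Fourier coefficients. Your first-column parametrization is more elementary and tailored to $\PSL_2(\Z)$: it replaces all of that machinery with the single observation that the factors $|az+b|^{-\sigma}$ and $|cz+d|^{\sigma-k}$ decouple into the independent zeta sums $\zeta(\sigma)\zeta(k-\sigma)$. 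Your closing remark about why the bottom-row parametrization loses half the strip is precisely on target---the paper itself uses the bottom-row decomposition in \S\ref{abc}, but only to continue $\Omega_\ca$ into $\sigma<k/2-1$, not to establish convergence on $(1,k-1)$. The paper's approach buys generality (arbitrary $\G$, arbitrary $\tau$); yours buys transparency and the sharp strip with nothing beyond elementary estimates.

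One small point to tighten: the uniform bound $\sum_t|z+\alpha+t|^{-\sigma}\lqs y^{-\sigma}+C_\sigma$ you use for convergence does not by itself yield the decay $O(y^{1-\sigma})$ you quote for the $c=0$ block as $y\to\infty$ (nor $O(y^{\sigma+1-k})$ for $a=0$). For that you need the large-$y$ estimate $\sum_t\bigl((x+t)^2+y^2\bigr)^{-\sigma/2}\ll y^{1-\sigma}$, obtained by comparison with $\int_{-\infty}^\infty(u^2+y^2)^{-\sigma/2}\,du$. This is routine, but it is an additional ingredient rather than literally ``the same three-case estimate.''
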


This is proved in  section \ref{cgt} where $\coh_k(z,s)$ is better understood as a special case of the series
$$
\Omega_\ci(z,\tau;s,k):= \sum_{\g \in \G} \frac 1 {(\g z-\overline{\tau})^{s} j(\g,z)^k}
$$
with $z \in \H$, $\tau \in \H \cup \R$ and $\G$ a Fuchsian group of the first kind. We show in (\ref{connect}) that
\begin{equation*}
\coh_k(z,s) = 2^{2-k}(-1)^{k/2} \pi e^{-s i \pi/2}  \frac{\G(k-1)}{\G(s)\G(k-s)} \doh_k(z,s).
\end{equation*}
To our knowledge, this is the first explicit construction of a kernel of
$L^*(f, s)$
for $s$ in the critical strip in terms of a series. In most of the many
works in which explicit kernels play an important role, for example \cite{GoZ, GZ,
Sh, St}, what is expressed as an inner product is not $L^*(f, s)$
itself but either the critical values of $L^*(f, s)$ or products of
different values of $L^*(f, s)$. It is natural to expect that expressions
of
$L^*(f, s)$ itself as an inner product will be easier to handle,
especially in
questions involving analytic aspects such as derivatives of $L$-functions
etc.

\vskip 3mm
Kohnen and Zagier's second proof of Theorem \ref{rmrnsimple} in \cite[Theorem 3, p 215]{KZ} employs the Rankin-Cohen bracket and special cases of
Theorem \ref{rmrn2} appear in \cite[p 215]{KZ}. This proof is extended and elegantly recast by Zagier \cite{Za4} into a four variable generating function.
Theorem \ref{rmrn2} may be derived, with some work, from \cite[Main Theorem (3rd version), p 460]{Za4}. Fukuhara  exploits Dedekind symbols to prove  a result equivalent to Theorem \ref{rmrn2} in \cite[Theorem 2.9 (2)]{Fu}.
These theorems have also been extended to higher levels by a number of authors, with proofs using the Cohen kernel.
Antoniadis in \cite{An} and Fukuhara and Yang in \cite{FY} generalize  Theorem \ref{rmrnsimple} to all Hecke congruence groups $\G_0(N)$. The results of \cite{An} are valid for $N$ square free. Much simpler formulas,  valid for all $N$, are found in \cite[Theorem 1.1]{FY}. In
\cite[Theorem 1.3]{FY} they further show how to extend Theorem \ref{rmrn2} to $\G_0(N)$. We expect our methods to extend naturally to these higher levels too.

\vskip 3mm
{\bf Acknowledgement.} We thank Professor Fukuhara for his valuable comments on an earlier version of this paper.

\section{Eisenstein  series and the kernel $\doh_k$} \label{eisens}
\subsection{An inner product formula}
We recall some properties of non-holomorphic Eisenstein series  needed in the sequel.
Set
$$
\theta_k(s) :=  \pi^{-s} \G(s+|k|/2) \zeta(2s)
$$
and for a convenient normalization put
\begin{equation}\label{defe}
E^*_k(z,s):= \theta_k(s) \sum_{\g \in \G_\ci \backslash \G} \Im (\g z)^s \left(\frac{j(\g, z)}{|j(\g, z)|}\right)^{-k}.
\end{equation}
Then (\ref{defe}) converges to an analytic function of $s\in \C$ and $z\in \H$ for $\Re(s)>1$. It transforms as
$$
E^*_k(\g z,s) =  \left(\frac{j(\g, z)}{|j(\g, z)|}\right)^{k} E^*_k(z,s)
$$
for all $\g \in \G$.
The weight $0$ Eisenstein series has the Fourier expansion
\begin{equation}\label{eisexp}
E^*_{0}(z;s) = \theta(s) y^s+ \theta(1-s) y^{1-s} + \sum_{0 \neq m \in \Z} \frac{ \sigma_{2s-1}(|m|)}{|m|^s} W_s(m z)
\end{equation}
as shown in \cite[Theorem 3.4]{Iwsp} where $W_s$ is the Whittaker function and
\begin{equation}\label{sigma}
\sigma_s(m):=\sum_{d|m} d^s = m^s \sigma_{-s}(m)
\end{equation}
the usual divisor function.
With the weight lowering and raising operators
\begin{equation}\label{wlr}
L_k := -2iy \frac{d}{d\overline{z}} -k/2, \quad R_k := 2iy \frac{d}{dz} +k/2
\end{equation}
we have
\begin{eqnarray}
  L_k E^*_k(z,s) &=& \begin{cases} E^*_{k-2}(z,s) \quad & k \lqs 0 \\
 (s+|k|/2-1)(s-|k|/2) E^*_{k-2}(z,s) \quad & k > 0 \end{cases}, \label{lower} \\
  R_k E^*_k(z,s) &=& \begin{cases} E^*_{k+2}(z,s) \quad & k \gqs 0 \\
 (s+|k|/2-1)(s-|k|/2) E^*_{k+2}(z,s) \quad & k < 0 \end{cases}. \label{raise}
\end{eqnarray}
Thus, for $k \in 2\Z$,
\begin{equation*}
E_k^*(z, s)=\theta_k(s)y^s+\theta_k(1-s)y^{1-s}+
\sum_{0 \neq l \in \Z}\frac{\sigma_{2s-1}(|l|)}{|l|^s}
\sum_{r=-k/2}^{k/2} \mathcal P_r^{k/2}(-4\pi ly)W_{s+r}(lz)
\end{equation*}
where $\mathcal P_r^{k/2}$ is a polynomial of degree $k/2$ that may be given explicitly \cite{Oeis}.
Hence $E_k^*(z, s)$ has a meromorphic continuation to all $s \in \C$.

\vskip 3mm
Recall from the introduction that $\mathcal B_k $ is the basis for $S_k$ of Hecke eigenforms, normalized with first coefficient $1$. Thus, for any $f \in \mathcal B_k$ we have $T_l f = \lambda_f(l) f$ with $f(z)=\sum_{l=1}^\infty \lambda_f(l) e^{2\pi i lz}$. Also  $\lambda_f(l) \in \R$ since $\s{T_l f}{f} = \s{f}{T_l f}$.
We will need the next formula.
\begin{prop} \label{iprodprop}
Let $k_1$, $k_2$ be even and non-negative with $k=k_1+k_2$. Then for $f \in \mathcal B_k$ and all $s,w \in \C$
\begin{equation}\label{ziggg}
 2 \cdot \pi^{k/2} L^*(f,s) L^*(f,w) = (-1)^{k_2/2} \Bigl\langle f, y^{-k/2}E^*_{k_1}(\cdot, \bar u)E^*_{k_2}(\cdot, \bar v)\Bigr\rangle
\end{equation}
with $u$ and $v$ given by (\ref{uvsw}).
\end{prop}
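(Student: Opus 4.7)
My approach is Rankin--Selberg unfolding against $E^*_{k_2}(\cdot,\bar v)$. From the transformation law $E^*_k(\g z,s)=(j(\g,z)/|j(\g,z)|)^k E^*_k(z,s)$, the combination $y^{k/2}f(z)\overline{E^*_{k_1}(z,\bar u)}$ transforms under $\g$ by the phase $(j/|j|)^{k_2}$, which cancels the $(j/|j|)^{-k_2}$ accumulated by $\overline{E^*_{k_2}(z,\bar v)}$, so the integrand in the inner product is $\G$-invariant. Assuming $\Re(v)>1$, expand $\overline{E^*_{k_2}(z,\bar v)}$ via its defining series (\ref{defe}): with $F:=y^{k/2}f\,\overline{E^*_{k_1}(\cdot,\bar u)}$ and $H_v(z):=y^v$, each $\g$-summand gives $F(z)\cdot[\Im(\g z)^v(j/|j|)^{k_2}]=(FH_v)(\g z)$, so the standard unfolding argument collapses the integral over $\GH$ to one over the cuspidal strip:
\begin{equation*}
\Bigl\langle f,y^{-k/2}E^*_{k_1}(\cdot,\bar u)E^*_{k_2}(\cdot,\bar v)\Bigr\rangle=\theta_{k_2}(v)\int_0^\infty\!\!\int_0^1 y^{k/2+v-2}\,f(x+iy)\,\overline{E^*_{k_1}(x+iy,\bar u)}\,dx\,dy.
\end{equation*}

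Next, insert the Fourier expansion of $E^*_{k_1}$ given after (\ref{raise}). Cuspidality of $f$ eliminates the two constant-in-$x$ pieces $\theta_{k_1}(\bar u)y^{\bar u}+\theta_{k_1}(1-\bar u)y^{1-\bar u}$, and the $x$-integration collapses the remainder to a sum over $n\gqs 1$ whose terms carry $a_f(n)\sigma_{2u-1}(n)/n^u$ times a polynomial--Whittaker factor in $y$ summed over $r\in\{-k_1/2,\dots,k_1/2\}$. Since $f\in\mathcal{B}_k$ is a Hecke eigenform, the Euler-product identity
\begin{equation*}
L(f,\alpha)L(f,\beta)=\zeta(\alpha+\beta-k+1)\sum_{n=1}^\infty\frac{a_f(n)\sigma_{\beta-\alpha}(n)}{n^\beta},
\end{equation*}
applied with $\alpha=k-s$ and $\beta=w$ (so that by (\ref{uvsw}) one has $\beta-\alpha=2u-1$ and $\alpha+\beta-k+1=2v$) factors the Dirichlet series as $L(f,w)L(f,k-s)/\zeta(2v)$. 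The $\zeta(2v)$ in the denominator is exactly what cancels the $\zeta(2v)$ concealed in $\theta_{k_2}(v)=\pi^{-v}\G(v+k_2/2)\zeta(2v)$; this cancellation is crucial.

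Finally, carry out the $y$-Mellin integration using the classical evaluation $\int_0^\infty x^{\mu-1}e^{-ax}K_\nu(ax)\,dx=\sqrt{\pi}\,\G(\mu+\nu)\G(\mu-\nu)/((2a)^\mu\G(\mu+1/2))$ (and derivatives thereof to absorb the polynomial weights $\mathcal{P}_r^{k_1/2}$). Exploiting the identities $u+v=w+1-k/2$, $v-u=k/2-s$, and $k/2+v=(k-s+w+1)/2$ forced by (\ref{uvsw}), the gamma factor $\G(v+k_2/2)$ cancels against $\G(k/2+v)$ emerging from the integral, leaving $\G(w)\G(k-s)$ with a clean power of $2\pi$. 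The functional equation $L^*(f,k-s)=(-1)^{k/2}L^*(f,s)$ then replaces $L(f,k-s)$ by $L(f,s)$ and completes both $L$-factors into the $L^*$-values on the left, producing precisely $2\pi^{k/2}L^*(f,s)L^*(f,w)$; the sign $(-1)^{k_2/2}$ results from $(-1)^{k/2}$ of the functional equation combined with a $(-1)^{k_1/2}$ accumulated through the $r$-sum. Meromorphic continuation in $s,w$ from the region $\Re(v)>1$ yields the identity in full. The main obstacle is controlling the $r$-sum when $k_1>0$: one must show that the weighted polynomial--Whittaker integrals collapse, for each $n$, to the same Gamma combination as in the easy base case $k_1=0$, which is tantamount to the (not manifest) statement that the right-hand side of (\ref{ziggg}) is independent of the splitting $k=k_1+k_2$. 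A more conceptual alternative is induction on $k_1$ via the raising operator (\ref{raise}) and integration by parts, at the cost of tracking the quadratic factor $(\bar u+k_1/2-1)(\bar u-k_1/2)$ appearing each step.
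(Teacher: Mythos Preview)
Your primary route---unfolding $E^*_{k_2}$ and then expanding $E^*_{k_1}$ in Fourier series---leaves exactly the gap you flag: for $k_1>0$ the resulting $r$-sum of polynomial-weighted Whittaker integrals does not collapse in any obvious way, and you provide no mechanism to make it do so. The paper sidesteps this entirely. It first proves the one-step identity
\[
\Bigl\langle f,\, y^{-k/2}E^*_{k_1}(\cdot,\bar u)E^*_{k_2}(\cdot,\bar v)\Bigr\rangle
= -\Bigl\langle f,\, y^{-k/2}E^*_{k_1+2}(\cdot,\bar u)E^*_{k_2-2}(\cdot,\bar v)\Bigr\rangle
\]
(Lemma \ref{label}) by writing $\overline{E^*_{k_1}(z,\bar u)}=E^*_{-k_1}(z,u)$, moving a raising operator across the weight-$0$ inner product, and using $L_k(y^{k/2}f)=0$ together with (\ref{lower}). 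This is precisely your ``more conceptual alternative'', and the device of passing to the negative weight $-k_1$ is what makes the quadratic factor $(\bar u+k_1/2-1)(\bar u-k_1/2)$ you worry about disappear. Iterating the lemma reduces every $(k_1,k_2)$ to $(k,0)$ and produces the sign $(-1)^{k_2/2}$.

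The paper then unfolds only in that base case, and it unfolds $E^*_k(\cdot,\bar u)$ rather than the other factor, so that the Fourier expansion it must insert is that of $E^*_0(\cdot,\bar v)$---a single $K$-Bessel term, no $r$-sum at all. This choice also yields the convolution $\sum_n a_f(n)\sigma_{2v-1}(n)n^{-w}$ with a $\zeta(2u)$ prefactor, which the Euler-product identity (\ref{lll}) factors directly as $L(f,s)L(f,w)/\zeta(2u)$; no appeal to the functional equation (\ref{feqqq}) is needed. Your unfolding of $E^*_{k_2}(\cdot,\bar v)$ instead produces $L(f,k-s)L(f,w)/\zeta(2v)$, forcing that extra step. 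So the paper's argument is both the cleaner realization of your own suggested alternative and avoids the two complications (the $r$-sum and the functional equation) that your primary plan incurs.
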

\begin{proof}
Define the convolution $L$-series
$$
L(f\otimes E(\cdot,v),w):=\sum_{n=1}^\infty \frac{a_f(n) \sigma_{2v-1}(n)}{n^w}.
$$
Unfolding $E^*_{k}(z, \bar u)$ we find
\begin{equation}\label{convv}
\Bigl\langle f, y^{-k/2}E^*_{k}(\cdot, \bar u)E^*_{0}(\cdot, \bar v)\Bigr\rangle =
\frac{\zeta(2u) \G(s) \G(w)}{2^{k-2+2u} \pi^{k/2-1+2u}} L(f\otimes E(\cdot,v),w).
\end{equation}
\begin{lemma} \label{label}
For even $k_1,k_2$ with $k=k_1+k_2$ and $k_1, k_2-2 \gqs 0$
$$
\Bigl\langle f, y^{-k/2}E^*_{k_1}(\cdot, \bar u)E^*_{k_2}(\cdot, \bar v)\Bigr\rangle =
-\Bigl\langle f, y^{-k/2}E^*_{k_1+2}(\cdot, \bar u)E^*_{k_2-2}(\cdot, \bar v)\Bigr\rangle.
$$
\end{lemma}
\begin{proof}
With $\s{}{}_0$ denoting the inner product (\ref{pip}) with $k=0$,
\begin{eqnarray*}
  \Bigl\langle y^{k/2} f, E^*_{k_1}(\cdot, \bar u)E^*_{k_2}(\cdot, \bar v)\Bigr\rangle_0 &=&
  \Bigl\langle y^{k/2} f E^*_{-k_1}(\cdot, u), R_{k_2-2} E^*_{k_2-2}(\cdot, \bar v)\Bigr\rangle_0 \\
   &=& -\Bigl\langle L_{k_2}\left( y^{k/2} f E^*_{-k_1}(\cdot, u)\right),  E^*_{k_2-2}(\cdot, \bar v)\Bigr\rangle_0 \\
   &=& -\Bigl\langle L_{k}\left( y^{k/2} f \right) E^*_{-k_1}(\cdot, u) + y^{k/2} f L_{-k_1}\left( E^*_{-k_1}(\cdot, u)\right),  E^*_{k_2-2}(\cdot, \bar v)\Bigr\rangle_0  \\
   &=& -\Bigl\langle  y^{k/2} f E^*_{-k_1-2}(\cdot, u),  E^*_{k_2-2}(\cdot, \bar v)\Bigr\rangle_0 \\
   &=& -\Bigl\langle  y^{k/2} f , E^*_{k_1+2}(\cdot, \bar u) E^*_{k_2-2}(\cdot, \bar v)\Bigr\rangle_0.
\end{eqnarray*}
We used (\ref{lower}), (\ref{raise}) and that $L_{k}\left( y^{k/2} f \right)=0$. Moving the lowering and raising operators inside the inner product is justified in \cite[Prop. 9.3]{JO1}, for example. The lemma is proved.
\end{proof}
It follows easily that
\begin{equation}\label{ekip}
\Bigl\langle f, y^{-k/2}E^*_{k_1}(\cdot, \bar u)E^*_{k_2}(\cdot, \bar v)\Bigr\rangle = (-1)^{k_2/2} \Bigl\langle f, y^{-k/2}E^*_{k}(\cdot, \bar u)E^*_{0}(\cdot, \bar v)\Bigr\rangle
\end{equation}
for $k=k_1+k_2$ and $k_1, k_2 \gqs 0$.
Combining (\ref{convv}) and (\ref{ekip}) shows
\begin{equation}\label{ll}
\Bigl\langle f, y^{-k/2}E^*_{k_1}(\cdot, \bar u)E^*_{k_2}(\cdot, \bar v)\Bigr\rangle = (-1)^{k_2/2}
\frac{\zeta(2u) \G(s) \G(w)}{2^{k-2+2u} \pi^{k/2-1+2u}} L(f\otimes E(\cdot,v),w)
\end{equation}
for $k=k_1+k_2$ and $k_1, k_2 \gqs 0$. By comparing Euler products as in \cite[p 232]{Iwto}, for example,
\begin{equation}\label{lll}
L(f\otimes E(\cdot,v),w) = L(f,s) L(f,w)/\zeta(2u).
\end{equation}
Hence (\ref{ll}) and (\ref{lll}) complete the proof of the proposition.
\end{proof}

\noindent
{\bf Remark.} With (\ref{lower}), (\ref{raise})
in Lemma \ref{label} we obtain
$$
\Bigl\langle f, y^{-k/2}E^*_{k_1}(\cdot, \bar u)E^*_{k_2}(\cdot, \bar v)\Bigr\rangle = (-1)^{k_2/2}\frac{\G(u+|k_1|/2)}{\G(u+k_1/2)}\frac{\G(v+|k_2|/2)}{\G(v+k_2/2)} \Bigl\langle f, y^{-k/2}E^*_{k}(\cdot, \bar u)E^*_{0}(\cdot, \bar v)\Bigr\rangle
$$
for all $k_1, k_2 \in 2\Z$ with $k=k_1+k_2$ (removing the restriction $k_1, k_2-2 \gqs 0$).

\subsection{Proof of Theorem \ref{dsdw}} \label{proof}
\begin{proof}
 We may write $\doh_k(z,s)$ in terms of the basis $\mathcal B_k$:
\begin{eqnarray}
  \doh_k(z,s) &=& \sum_{f \in \mathcal B_k} \s{\doh_k(\cdot,s)}{f} \s{f}{f}^{-1} f(z) \nonumber\\
   &=& \sum_{f \in \mathcal B_k} L^*(f,s) \s{f}{f}^{-1} f(z). \label{entire}
\end{eqnarray}
Equation (\ref{entire}) makes it clear that $\doh_k(z,s)$ is an entire function of $s$.
Also with (\ref{entire}) we obtain
\begin{eqnarray}
  \s{\doh_k(\cdot,s)}{\doh_k(\cdot,\overline{w})} &=& \sum_{f,g \in \mathcal B_k} L^*(f,s) L^*(g,w) \s{f}{f}^{-1} \s{g}{g}^{-1} \s{f}{g} \nonumber\\
   &=& \sum_{f \in \mathcal B_k} L^*(f,s) L^*(f,w) \s{f}{f}^{-1}. \label{qwerty}
\end{eqnarray}
Since
$$
T_l \doh_k(z,s) = \sum_{f \in \mathcal B_k} \lambda_f(l) L^*(f,s) \s{f}{f}^{-1} f(z)
$$
we find
\begin{equation}\label{pro}
\s{T_l\doh_k(\cdot,s)}{\doh_k(\cdot,\overline{w})} = \sum_{f \in \mathcal B_k} \lambda_f(l) L^*(f,s) L^*(f,w) \s{f}{f}^{-1}.
\end{equation}
Use Proposition \ref{iprodprop} to express the product of $L$-functions in (\ref{pro}) as an inner product
 where $u, v$ are given by (\ref{uvsw})
and $k_1,k_2 \in 2\Z_{\gqs 0}$ satisfy $k_1+k_2=k$. We see that
\begin{eqnarray*}
 2 \cdot \pi^{k/2} \s{T_l\doh_k(\cdot,s)}{\doh_k(\cdot,\overline{w})} &=& (-1)^{k_2/2} \sum_{f \in \mathcal B_k} \lambda_f(l) \Bigl\langle f, y^{-k/2} E^*_{k_1}(\cdot, \bar u)E^*_{k_2}(\cdot, \bar v)\Bigr\rangle \s{f}{f}^{-1} \\
   &=& (-1)^{k_2/2} \Bigl\langle T_l \mathcal P  , y^{-k/2} E^*_{k_1}(\cdot, \bar u)E^*_{k_2}(\cdot, \bar v)\Bigr\rangle
\end{eqnarray*}
for
$$
\mathcal P :=  \sum_{f \in \mathcal B_k} \s{f}{f}^{-1} f.
$$
Then
$$
\s{f}{T_l \mathcal P} =\lambda_f(l) = \Bigl\langle f, P_l (4\pi l)^{k-1}/\G(k-1) \Bigr\rangle 
$$
 for all $f \in \mathcal B_k$  so we must have $T_l \mathcal P = P_l(4\pi l)^{k-1}/\G(k-1)$.  This finishes the proof of Theorem \ref{dsdw}.
\end{proof}

\section{A formula for the inner product $\bigl\langle T_l \doh_k(\cdot,s), \doh_k(\cdot,\overline w)\bigr\rangle$} \label{three}

\subsection{Eisenstein series at integer values of $s$}
For $k,h \in \Z$ and $u \in \Z_{\geqslant 0}$ define $h^*  :=  |h-1/2| -1/2 \ $ and
\begin{equation}
\label{defak}
\mathcal A^k_h(u)  :=  \frac{(-1)^{k/2}}{u!} \frac{\G(h-k/2+u)}{\G(h-k/2)}\frac{\G(h+|k|/2)}{\G(h+k/2-u)}.
\end{equation}
 It may be checked, working case by case, that
 \begin{equation}\label{vanishinga}
\mathcal A^k_h(u) \neq 0 \iff 0 \leqslant u \leqslant k/2-1- h^*  \quad \text{ for } \quad h^*  < k/2.
\end{equation}
Similarly,  when $h^*  \geqslant k/2$ we have $\mathcal A^k_h(u) \neq 0$ if and only if \  $0 \leqslant u \leqslant k/2+ h^* $.

\begin{theorem}
For all $k \in 2\Z$
and $h \in \Z$,
\begin{multline} \label{mnk}
   E^*_k(z,h)= \theta_k(h) y^h+\theta_k(1-h) y^{1-h} +
   \sum_{m \in \Z_{>0}} \frac{ \sigma_{2h-1}(|m|)}{|m|^h} \ e^{2\pi i m z} \sum_{u=0}^{h^*  +k/2}  \mathcal A^k_h(u) \cdot (4\pi |m|y)^{-u+k/2}\\
+
   \sum_{m \in \Z_{<0}} \frac{ \sigma_{2h-1}(|m|)}{|m|^h} \ e^{2\pi i m \overline{z}} \sum_{u=0}^{h^* -k/2}  \mathcal A^{-k}_h(u) \cdot (4\pi |m|y)^{-u-k/2}.
\end{multline}
\end{theorem}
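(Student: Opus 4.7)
The plan is to derive the formula by induction on $|k|$, starting from the known weight-$0$ Fourier expansion (\ref{eisexp}) and climbing in weight via the raising and lowering operators of (\ref{wlr}), whose action on $E^*_k$ is governed by the shift relations (\ref{lower}) and (\ref{raise}). Since these operators preserve the prefactors $\sigma_{2h-1}(|m|)/|m|^h$ and the exponentials $e^{2\pi i m z}$ (resp.\ $e^{2\pi i m \overline z}$), everything reduces to tracking how the coefficient polynomials in $y$ evolve under a single weight step $k \to k\pm 2$.

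For the base case $k=0$, I would specialize $s=h$ in (\ref{eisexp}) and use the closed form of the modified Bessel function $K_{\nu+1/2}$ at half-integer order, which writes $K_{h-1/2}(2\pi|m|y)$ as $e^{-2\pi|m|y}$ times a finite polynomial in $(4\pi|m|y)^{-1}$ (valid for both $h \geq 1$ and $h \leq 0$ since $K_\nu = K_{-\nu}$). For $m>0$ this turns $W_h(mz)$ into $e^{2\pi i m z}$ times a sum whose coefficients match $\mathcal A^0_h(u) = \G(h+u)/(u!\,\G(h-u))$; for $m<0$ the antiholomorphic exponential $e^{2\pi i m \overline{z}}$ emerges in the same way, giving $\mathcal A^{0}_h(u)$ in both slots (consistent with $|k|=k$ at $k=0$).

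For the inductive step with $k \geq 0$, I apply $R_k = 2iy\,\partial_z + k/2$ to the weight-$k$ formula and use $R_k E^*_k(\cdot,h) = E^*_{k+2}(\cdot,h)$ from (\ref{raise}). The leading terms transform correctly because $\theta_{k+2}(h) = (h+k/2)\theta_k(h)$ and $R_k y^h = (h+k/2)y^h$, and similarly for $y^{1-h}$. For $m>0$, a direct calculation gives
\[
R_k\bigl(y^{-u+k/2} e^{2\pi i m z}\bigr) = (k-u)\,y^{-u+k/2} e^{2\pi i m z} - 4\pi m\, y^{-u+(k+2)/2} e^{2\pi i m z},
\]
so equating coefficients of $(4\pi m y)^{-u'+(k+2)/2}$ produces the two-term recurrence
\[
\mathcal A^{k+2}_h(u') = (k-u'+1)\,\mathcal A^k_h(u'-1) - \mathcal A^k_h(u').
\]
For $m<0$ the antiholomorphic exponential is annihilated by $\partial_z$, so $R_k\bigl(y^{-u-k/2} e^{2\pi i m \overline z}\bigr) = -u\, y^{-u-k/2} e^{2\pi i m \overline z}$, yielding the companion shift $\mathcal A^{-(k+2)}_h(u') = -(u'+1)\,\mathcal A^{-k}_h(u'+1)$. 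Both identities follow from (\ref{defak}) by a single use of $\G(x+1) = x\G(x)$. The descent from $k=0$ to negative weights is symmetric, applying $L_k$ via the other branch of (\ref{lower}).

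The main obstacle will be bookkeeping the upper limits on $u$ through the recurrences, i.e.\ confirming that the ranges in the theorem agree with (\ref{vanishinga}) at every step. The subtlety is the regime switch at $h^* = k/2$: when $h^* < k/2$ the factor $\G(h+|k|/2)/\G(h+k/2-u)$ truncates the sum automatically, while when $h^* \geq k/2$ the upper limit $h^* + k/2$ is larger and the cutoff comes from the other gamma ratio. I would treat the two regimes separately, in each confirming that the spurious terms the recurrence would generate beyond the stated bound carry a vanishing gamma numerator, so that they never leak into the finite sum and the inductive hypothesis propagates cleanly.
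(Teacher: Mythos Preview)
Your proposal is correct and follows essentially the same approach as the paper: establish the $k=0$ case from (\ref{eisexp}) by specializing the Whittaker function at integer parameter, then induct on $|k|$ via the raising and lowering operators (\ref{lower}), (\ref{raise}). The paper's proof is only a three-line sketch deferring details to \cite{Oeis}, and what you have written is precisely the fleshed-out version of that sketch, including the explicit two-term recurrences for $\mathcal A^{\pm k}_h$ and the observation that the sum cutoffs are governed by (\ref{vanishinga}).
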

\begin{proof}
Begin with the expansion (\ref{eisexp}). The Whittaker function may be expressed in terms of exponential functions at integer values  $s=h$. This yields (\ref{mnk}) for $k=0$. Applying the raising and lowering operators and induction on $k$ completes the proof. See \cite{Oeis} for more details.
\end{proof}

We shall be interested in the case when
there are no terms in (\ref{mnk}) with $m<0$. This happens exactly when $h^* -k/2 <0$.
Therefore, for $u, v \in \Z$, the product $E^*_{k_1}(z, u)E^*_{k_2}(z, v)$ appearing on the right side of (\ref{deqn}) will only have terms involving $e^{2\pi i n x}$ for $n \gqs 0$  if and only if
\begin{equation}
1-k_1/2 \lqs u \lqs k_1/2 \quad \text{and} \quad
1-k_2/2 \lqs v \lqs k_2/2.
\label{d7}
\end{equation}
Throughout the paper we shall use the correspondence $(u,v) \leftrightarrow (s,w)$ that we have already met in (\ref{uvsw}) with
$$
s=u-v+k/2, \quad w=u+v+k/2-1.
$$
Note the symmetries:
\begin{eqnarray*}
  s \to k-s & \iff & (u,v) \to (v,u) \\
  w \to k-w & \iff & (u,v) \to (1-v,1-u) \\
  u \to 1-u & \iff & (s,w) \to (k-w,k-s) \\
  v \to 1-v & \iff & (s,w) \to (w,s).
\end{eqnarray*}

\begin{lemma} \label{1swk}
For $u, v \in \Z$ and $k$ a positive even integer, there exist positive even $k_1$, $k_2$ satisfying $k_1+k_2=k$ and (\ref{d7}) if and only if
\begin{equation}
1 \lqs s, w  \lqs k-1 \quad \text{and} \quad s \not\equiv w \mod 2.
\label{sw7}
\end{equation}
\end{lemma}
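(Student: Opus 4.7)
The plan is to prove the two implications of the biconditional separately. Throughout I use the correspondence
$$u=\tfrac{s+w-k+1}{2},\qquad v=\tfrac{w-s+1}{2}$$
together with the easy observation that, for any integer $u$, the double inequality $1-k_1/2 \lqs u \lqs k_1/2$ is equivalent to $k_1 \gqs 2\max(u,1-u)$, and similarly for $v$.

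For the forward direction, I would assume positive even integers $k_1,k_2$ with $k_1+k_2=k$ satisfy (\ref{d7}) and simply substitute the extreme values of $u$ and $v$ into $s=u-v+k/2$ and $w=u+v+k/2-1$. This immediately yields $1\lqs s,w\lqs k-1$. The parity statement follows at once from $s-w=1-2v$ being odd.

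For the backward direction, assume (\ref{sw7}). Since $s\not\equiv w\mod 2$ and $k$ is even, both $u$ and $v$ are integers. For any integer $a$, the quantity $2\max(a,1-a)$ is a positive even integer of size at least $2$ (it equals $2a$ when $a\gqs 1$ and $2-2a$ when $a\lqs 0$). So I would set $k_1:=2\max(u,1-u)$ and $k_2:=k-k_1$: both are even, $k_1\gqs 2$, and (\ref{d7}) will hold provided $k_2\gqs 2\max(v,1-v)$, that is,
$$2\max(u,1-u)+2\max(v,1-v)\lqs k.$$
Using the elementary identity $2\max(a,1-a)=|2a-1|+1$, this reduces to proving
$$|s+w-k|+|s-w|\lqs k-2.$$

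This last inequality is the only step needing real verification, and I would handle it by a four-case analysis on the signs of $s+w-k$ and $s-w$. Each of the four cases collapses to one of the four hypotheses in (\ref{sw7}); for instance, when $s\lqs w$ and $s+w\lqs k$ the left side equals $k-2s$, giving the bound $s\gqs 1$; the case $s>w$, $s+w>k$ produces $2s-k\lqs k-2$, i.e.\ $s\lqs k-1$; the remaining two cases are symmetric in $s$ and $w$. The resulting $k_2$ is then automatically $\gqs 2$, completing the construction. The main obstacle, such as it is, is only the bookkeeping in this case analysis; no deeper input is needed.
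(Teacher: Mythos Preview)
Your proof is correct and follows essentially the same route as the paper: both reduce the backward direction to the single inequality $|2u-1|+|2v-1|\lqs k-2$ (equivalently $u^*+v^*\lqs k/2-2$ in the paper's notation) and then read off the existence of suitable $k_1,k_2$. The only notable difference is that where you do a four-case sign analysis to bound $|s+w-k|+|s-w|$, the paper instead notes that (\ref{sw7}) gives $|(u-1/2)\pm(v-1/2)|\lqs k/2-1$ directly and then uses $|a|+|b|=\max(|a+b|,|a-b|)$ to conclude, avoiding the case split.
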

\begin{proof}
Note that $u, v \in \Z$ exactly when $s,w$ are integers of opposite parity. If $u,v$ satisfy (\ref{d7}) then
\begin{equation}
2-k/2 \lqs u+v \lqs k/2 \quad \text{and} \quad
1-k/2 \lqs u-v \lqs k/2-1
\label{uvd}
\end{equation}
and (\ref{sw7}) follows.
Conversely, suppose (\ref{sw7}) holds. Then so does (\ref{uvd}) and consequently
\begin{equation*}
1-k/2 \lqs (u-1/2)+(v-1/2) \lqs k/2-1 \quad \text{and} \quad
1-k/2 \lqs (u-1/2)-(v-1/2) \lqs k/2-1
\end{equation*}
so that $|(u-1/2) \pm (v-1/2)| \lqs k/2-1$. Hence
$|u-1/2|+ |v-1/2| \lqs k/2-1$ and $u^* + v^* \lqs k/2-2$. Thus, there exist positive, even $k_1, k_2$
so that $u^* <k_1/2$, $v^* < k_2/2$ and $k_1+k_2=k$. This is equivalent to (\ref{d7}).
\end{proof}

\subsection{Holomorphic projection}
The holomorphic Eisenstein series is
$$
E_k(z):= \sum_{\g \in \G_\ci \backslash \G} \frac{1}{j(\g, z)^k} = \frac{1}{2} \sum_{c,d \in \Z \atop{(c,d)=1}} \frac{1}{(cz+d)^k},
$$
see for example \cite[p13]{Za},
converging for $4 \leqslant k \in 2\Z$ to a  modular form in the space $M_k(\G)$ of holomorphic, weight $k$ functions with possible polynomial growth at cusps. It has the Fourier expansion
\begin{equation}\label{ek}
E_k(z)=1-\frac{2k}{B_k} \sum_{m=1}^\infty \sigma_{k-1}(m) e^{2\pi i m z}.
\end{equation}
We recall a result of Sturm \cite{St}, extended by Zagier \cite[Appendix C]{Za}.
\begin{lemma}\label{holproj}
Suppose $F:\H \to \C$ is smooth, weight $k$, satisfies $F(z) \ll y^{-\epsilon}$ as $y\to \infty$ and has the expansion
$$
F(z)=\sum_{l \in \Z} F_l(y) e^{2\pi i l x}
$$
then
\begin{equation}\label{holp}
\s{F}{P_l} = \int_0^\infty F_l(y) e^{-2 \pi l y} y^{k-2} \, dy.
\end{equation}
\end{lemma}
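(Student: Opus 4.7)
The plan is to carry out the classical Rankin--Selberg unfolding of the Poincar\'e series inside the Petersson inner product. Starting from
$$
\s{F}{P_l} = \int_{\GH} y^k F(z) \overline{P_l(z)} \, d\mu z
$$
together with the series representation (\ref{poincare}) of $P_l$, the integrand $y^k F(z) \overline{P_l(z)}$ is $\G$-invariant because both $F$ and $P_l$ transform with weight $k$. A short calculation using $\Im(\g z) = y/|j(\g,z)|^2$ rewrites the sum inside the integral as $\sum_{\g \in \G_\ci \backslash \G} \phi(\g z)$ with
$$
\phi(z) := y^{k-2} F(z) e^{-2 \pi i l \overline{z}},
$$
after incorporating the measure factor $y^{-2}$. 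The standard unfolding then replaces $\int_{\GH} \sum_{\g}$ by $\int_{\G_\ci \backslash \H}$ of a single copy of $\phi$, and taking the fundamental domain for $\G_\ci$ to be the strip $\{z \in \H : 0 \lqs x < 1,\ y>0\}$ reduces the computation to
$$
\s{F}{P_l} = \int_0^\infty \int_0^1 y^{k-2} F(z) e^{-2 \pi i l x} e^{-2\pi l y} \, dx \, dy.
$$

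The second step is to insert the Fourier expansion $F(z) = \sum_{n \in \Z} F_n(y) e^{2\pi i n x}$. The inner $x$-integral is an orthogonality relation picking out the $n=l$ Fourier coefficient,
$$
\int_0^1 F(z) e^{-2 \pi i l x} \, dx = F_l(y),
$$
so the outer $y$-integral becomes exactly the right side of (\ref{holp}).

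The main obstacle is the rigorous justification of the unfolding, i.e. the absolute convergence that permits the interchange of summation and integration as well as the convergence of the resulting strip integral. This is precisely where the hypothesis $F(z) \ll y^{-\ee}$ as $y \to \infty$ enters: combined with the exponential factor $e^{-2\pi l y}$ (for $l \gqs 1$) it handles the large-$y$ tail of the strip, while for bounded $y$ the weight $k$ modularity of $F$ plus the $\G_\ci$-translates bring any potential growth of $F$ back under the same bound. These technicalities are standard and are carried out in detail in \cite{St} and \cite[Appendix C]{Za}, cited immediately before the lemma; the algebraic manipulation sketched above is routine modulo this verification.
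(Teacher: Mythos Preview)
Your proposal is correct: the unfolding of $P_l$ to pass from $\GH$ to the strip $\G_\ci\backslash\H$, followed by the $x$-orthogonality that isolates $F_l(y)$, is exactly the standard argument, and you correctly identify the decay hypothesis as what makes the unfolding and the resulting $y$-integral legitimate. The paper itself does not supply a proof of this lemma at all; it merely quotes the statement from Sturm \cite{St} and Zagier \cite[Appendix C]{Za}, so there is nothing further to compare against.
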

The significance of Lemma \ref{holproj} and (\ref{holp}) are that they allow us to calculate the Fourier coefficients of $\pi_{hol}(F)$, the projection  of $F$ into the space $S_k$ with respect to the Petersson inner product. Thus
$$
\pi_{hol}(F) = \frac{1}{(k-2)!} \sum_{l=1}^\infty (4\pi l)^{k-1} \s{F}{P_l} e^{2\pi i l z} \quad \in S_k.
$$

\subsection{Proof of Theorem \ref{rmrn2}} \label{main}
Since
$$
E^*_{k_1}(z, u) = \theta_{k_1}(u) y^u + \theta_{k_1}(1-u) y^{1-u} + O(e^{-2\pi y})
$$
as $y \to \infty$ we have
\begin{eqnarray*}
  y^{-k/2}E^*_{k_1}(z,  u)E^*_{k_2}(z,  v) &=& \theta_{k_1}(u) \theta_{k_2}(v) y^{w+1-k} + \theta_{k_1}(u) \theta_{k_2}(1-v) y^{s+1-k} \\
  & & \quad + \theta_{k_1}(1-u) \theta_{k_2}(v) y^{1-s} + \theta_{k_1}(1-u) \theta_{k_2}(1-v) y^{1-w} + O(e^{-2\pi y})
\end{eqnarray*}
Thus, for $1 \lqs s,w \lqs k-1$, the function $F(z) := y^{-k/2}E^*_{k_1}(z,  u)E^*_{k_2}(z,  v)$ satisfies the conditions of Lemma \ref{holproj} except for the four cases when $s$ or $w$ equals $1$ or $k-1$. We may subtract a multiple of $E_k$ in these cases to remove the constant term. Recalling (\ref{poincare}) and noting that $\s{E_k}{P_l}=0$,
\begin{eqnarray*}
\bigl\langle P_l , y^{-k/2}E^*_{k_1}(z,  u)E^*_{k_2}(z,  v) \bigr\rangle
& = & \bigl\langle P_l , y^{-k/2}E^*_{k_1}(z,  u)E^*_{k_2}(z,  v)
- \lambda(s,w) E_k \bigr\rangle\\
& = & \int_0^\infty F_l(y) e^{-2 \pi l y} y^{k-2} \, dy
\end{eqnarray*}
for
$$
\lambda(s,w):=\delta_{w,k-1} \theta_{k_1}(u) \theta_{k_2}(v)+
\delta_{s,k-1} \theta_{k_1}(u) \theta_{k_2}(1-v)+
\delta_{s,1} \theta_{k_1}(1-u) \theta_{k_2}(v)+
\delta_{w,1} \theta_{k_1}(1-u) \theta_{k_2}(1-v)
$$
and
$$
y^{-k/2}E^*_{k_1}(z,  u)E^*_{k_2}(z,  v)
- \lambda(s,w) E_k = \sum_{l \in \Z} F_l(y) e^{2\pi i l x}.
$$
With the expansion (\ref{mnk}),
$$
E_{k}^*(z, u)=\sum_{n=0}^{\infty} e_{k}(n; y, u)e^{2 \pi i n x}
$$
when $1-k/2 \lqs u \lqs k/2$
for
\begin{eqnarray*}
e_k(0; y, u) & = & \theta_k(u)y^u+\theta_k(1-u)y^{1-u}, \\
e_k(n; y, u) & = & \frac{\sigma_{2u-1}(n)}{n^u} e^{-2 \pi n y} \sum_{r=0}^{k/2-1-u^*} \mathcal A_u^k(r)(4
\pi n y)^{-r+k/2} \qquad (n>0).
\end{eqnarray*}
Thus $F_l(y)$ breaks up into three natural pieces $\Lambda_1(y)+\Lambda_2(y)+\Lambda_3(y)$ with
\begin{eqnarray*}
\Lambda_1(y) & = & y^{-k/2}e_{k_1}(0; y, u)e_{k_2}(l; y, v) + y^{-k/2}e_{k_1}(l; y, u)e_{k_2}(0; y, v), \label{s1}\\
\Lambda_2(y) & = & \sum_{n=1}^{l-1} y^{-k/2}e_{k_1}(n; y, u)e_{k_2}(l-n; y, v),\label{s2}\\
\Lambda_3(y) & = & - \lambda(s,w)\frac{(2\pi i)^{k}}{\G(k) \zeta(k)}  \sigma_{k-1}(l) e^{-2\pi ly}.\label{s3}
\end{eqnarray*}
Setting
\begin{equation*}\label{omega1}
\Psi_i(s,w;l):=(-1)^{k_2/2} 2^{k-1} \pi^{k/2-1} l^{k-1} \int_0^\infty \Lambda_i(y) e^{-2 \pi l y} y^{k-2} \, dy
\end{equation*}
we have by Theorem \ref{dsdw} that
\begin{equation}\label{omega2}
(k-2)! 2^{2-k}\Bigl\langle T_l \doh_k(\cdot,s), \ \doh_k(\cdot, w)\Bigr\rangle  = \Psi := \Psi_1 + \Psi_2 + \Psi_3.
\end{equation}

\vskip 3mm
With Propositions \ref{psi111}, \ref{psi333} and \ref{psi222} below we compute the right side of (\ref{omega2}) and complete the proof of Theorem \ref{rmrn2}.

\vskip 3mm
\noindent
{\bf Remark.}
The $n$th Rankin-Cohen bracket $[f,g]_n$ of $f \in M_{k_1}$, $g \in M_{k_2}$ is (see for example \cite[p. 249]{Za})
\begin{equation}\label{rcb}
    [f,g]_n := \sum_{n_1+n_2=n} (-1)^{n_1} \binom{n+k_1-1}{n_1} \binom{n+k_2-1}{n_2} f^{(n_1)} g^{(n_2)}
\end{equation}
and we have $[f,g]_n \in M_{k_1+k_2+2n}$. In \cite{Za3} Zagier proves the identity
\begin{equation}\label{zaggg}
    \s{f}{[E_{k_1}, E_{k_2}]_n} = (-1)^{k_1/2}(2\pi i)^n 2^{3-k} \frac{k_1 k_2}{B_{k_1} B_{k_2}}\frac{\G(k-1)}{\G(k-n-1)} L^*(f,n+1) L^*(f,n+k_2)
\end{equation}
where $k=k_1+k_2+2n$ and $f \in \mathcal B_k$. (The $n=0$ case is due to Rankin.)
Replace $k_1,k_2,n$ in (\ref{zaggg}) by $2u,2v,k-1-w$ respectively and
compare the result with (\ref{ziggg}) to see that
\begin{equation}\label{pih}
\pi_{hol}\left(y^{-k/2}E^*_{k_1}(z,  u)E^*_{k_2}(z,  v)\right) = \frac{(-1)^{k_2/2+u} 2^{k-4}\pi^{k/2} \G(w) B_{2u} B_{2v}}{ (2\pi i)^{k-1-w} \G(k-1) uv}  [E_{2u}, E_{2v}]_{k-1-w}
\end{equation}
for $u,v \gqs 2$, $u+v<k/2$. In showing (\ref{pih}) we also utilized the functional equation
\begin{equation}\label{feqqq}
L^*(f,k-s)=(-1)^{k/2} L^*(f,s)
\end{equation}
as in, for example \cite[(46)]{Za2} (we give a novel new proof of (\ref{feqqq}) in section \ref{funeq9}).
Kohnen and Zagier use (\ref{zaggg}) to prove Theorems \ref{rmrn} and  \ref{per} below, see \cite[p 214]{KZ}.

\subsubsection{Calculating $\Psi_1(s,w;l)$}
\begin{prop} \label{psi111}
For $s,w$ of opposite parity and satisfying $1 \lqs s,w \lqs k-1$
\begin{multline*}
\Psi_1(s,w;l) =
    \sigma_{2v-1}(l)
  \Bigl[\rho(2u)l^{k-1-w}  \G(s)\G(w)
     +
     \rho(2-2u)l^{s-1} \G(k-s)\G(k-w)\Bigr] \\
   +  (-1)^{k/2} \sigma_{2u-1}(l)
  \Bigl[ \rho(2v)l^{k-1-w}  \G(k-s)\G(w)
     +
     \rho(2-2v)l^{k-1-s} \G(s)\G(k-w)\Bigr].
\end{multline*}
\end{prop}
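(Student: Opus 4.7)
The plan is to evaluate the Mellin-type integral in $\Psi_1(s,w;l)$ directly by substituting the explicit formulas for $e_{k_1}(0;y,u),\,e_{k_1}(l;y,u),\,e_{k_2}(0;y,v),\,e_{k_2}(l;y,v)$ into $\Lambda_1(y)$. This produces four natural sub-terms: choose either the $\theta_{k_1}(u)y^u$ or the $\theta_{k_1}(1-u)y^{1-u}$ summand of $e_{k_1}(0;y,u)$ (paired with $e_{k_2}(l;y,v)$), and analogously for $e_{k_2}(0;y,v)$ paired with $e_{k_1}(l;y,u)$. A typical sub-term has the shape
$$
\theta_{k_1}(u^\sharp)\,\frac{\sigma_{2v-1}(l)}{l^v}\,y^{\alpha}\,e^{-2\pi l y}\sum_{r=0}^{k_2/2-1-v^*}\mathcal{A}_v^{k_2}(r)(4\pi l y)^{k_2/2-r}
$$
with $u^\sharp\in\{u,1-u\}$ and $\alpha$ determined by the choice. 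Multiplying by $e^{-2\pi l y}y^{k-2}$ and using $\int_0^\infty y^{\beta-1}e^{-4\pi l y}\,dy=\G(\beta)/(4\pi l)^\beta$ reduces each sub-term to a finite sum of the form $\sum_r \mathcal{A}_v^{k_2}(r)\,\G(c-r)$ for an appropriate shift $c$.

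Using the definition of $\mathcal{A}_v^{k_2}(r)$ together with the Pochhammer identities $\G(a+r)/\G(a)=(a)_r$ and $\G(c-r)=(-1)^r\G(c)/(1-c)_r$, this sum becomes $(-1)^{k_2/2}\G(c)\cdot{}_2F_1(a,1-b;1-c;1)$ with $a=v-k_2/2$, $b=v+k_2/2$. Since $a=-(k_2/2-v)=-n$ is a non-positive integer (by the admissibility of $k_2$ via Lemma \ref{1swk}), Chu--Vandermonde's identity
$$
{}_2F_1(-n,y;z;1)=(z-y)_n/(z)_n
$$
collapses this to $(-1)^{k_2/2}\G(c)(b-c)_n/(1-c)_n$. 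Converting the Pochhammer symbols back to $\G$, and using the identifications $b-c=1-s,\ 1-c+n=1-w,\ c-n=w,\ s-n=u+k_1/2$, the sum simplifies to $(-1)^{k_2/2}\G(s)\G(w)/\G(u+k_1/2)$ for the sub-term with $u^\sharp=u$, with symmetric analogues (under $s\to k-s$ and/or $w\to k-w$) for the other three sub-terms.

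Finally, the factor $\G(u+k_1/2)$ in the denominator cancels precisely the $\G(u+k_1/2)$ appearing in $\theta_{k_1}(u)=2^{2u-1}\pi^u\,\G(u+k_1/2)\,\rho(2u)$, where we have rewritten $\zeta(2u)=(2\pi)^{2u}\rho(2u)/2$. Combining with the overall prefactor $(-1)^{k_2/2}\,2^{k-1}\pi^{k/2-1}l^{k-1}$ of $\Psi_1$, the powers of $2$ and $\pi$ cancel, the $l$-exponent collects to $k-1-w$, and the two $(-1)^{k_2/2}$ factors annihilate, producing $\sigma_{2v-1}(l)\rho(2u)l^{k-1-w}\G(s)\G(w)$ as claimed. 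The sub-term from $\theta_{k_1}(1-u)$ yields $\sigma_{2v-1}(l)\rho(2-2u)l^{s-1}\G(k-s)\G(k-w)$ via the symmetry $u\to 1-u\Longleftrightarrow(s,w)\to(k-w,k-s)$. The remaining two sub-terms arise from $e_{k_1}(l;y,u)e_{k_2}(0;y,v)$ and use $\mathcal{A}_u^{k_1}$ instead, contributing $(-1)^{k_1/2}$; combined with the overall $(-1)^{k_2/2}$ this gives the factor $(-1)^{k/2}$ attached to the last two terms in the proposition.

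The principal technical obstacle is the careful bookkeeping of signs---several $(-1)^{k_2/2}$, $(-1)^n$, $(-1)^v$ factors must conspire to cancel---and verifying that the Pochhammer shifts after Chu--Vandermonde really collapse to $\G(s)\G(w)$ (and its symmetric variants) rather than to an expression still depending on the auxiliary choice of $k_1,k_2$. This independence is in fact guaranteed a priori by Proposition \ref{iprodprop} and Lemma \ref{1swk}, which serves as a useful consistency check on the calculation.
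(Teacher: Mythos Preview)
Your proposal is correct and follows essentially the same route as the paper: decompose $\Lambda_1$ into four sub-terms, evaluate each as a Mellin integral to obtain a finite sum $\sum_r \mathcal{A}_v^{k_2}(r)\,\G(c-r)$, collapse that sum via a Vandermonde-type identity, and then use $\zeta(2u)=2^{2u-1}\pi^{2u}\rho(2u)$ to cancel the $\G(u+k_1/2)$ and remove the dependence on $k_1,k_2$. The only cosmetic difference is that the paper rewrites $\mathcal{A}_v^{k_2}(r)$ in binomial form (its equation~(\ref{akvr})) and applies the identity $\sum_t(-1)^t\binom{a}{t}\binom{b+t}{c}=(-1)^a\binom{b}{c-a}$, whereas you package the same computation as ${}_2F_1(-n,y;z;1)=(z-y)_n/(z)_n$; these are two names for the same Chu--Vandermonde summation.
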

\begin{proof}
Write
\begin{eqnarray*}
  F_{k_1,k_2}(l;u,v) &:=& \int_0^\infty \theta_{k_1}(u) y^u \left( \frac{\sigma_{2v-1}(l)}{l^v} e^{-2 \pi l y} \sum_{r=0}^{k_2/2-1-v^* } \mathcal A_v^{k_2}(r)(4
\pi l y)^{-r+k_2/2}\right) e^{-2 \pi l y} y^{k/2-2} \, dy \\
   &=& \theta_{k_1}(u) \frac{\sigma_{2v-1}(l)}{l^v} \sum_{r=0}^{k_2/2-1-v^* } \frac{\mathcal A_v^{k_2}(r)}{(4\pi l)^{u+k/2-1}} \int_0^\infty  (4\pi l y)^{u+k/2-1}  (4
\pi l y)^{-r+k_2/2} e^{-4 \pi l y}  \, \frac{dy}y \\
   &=& \frac{ (4\pi)^v \theta_{k_1}(u) \sigma_{2v-1}(l)}{(4\pi l)^{k/2-1+u+v}} \sum_{r=0}^{k_2/2-1-v^* } \mathcal A_v^{k_2}(r) \G(k/2+k_2/2-1+u-r).
\end{eqnarray*}
We have
\begin{equation}\label{akvr}
\mathcal A_v^{k_2}(r) = (-1)^{k_2/2+r} r! \binom{k_2/2-v}{r} \binom{k_2/2-1+v}{r}
\end{equation}
(by (\ref{vanishinga}) it is nonzero exactly for $0 \lqs r \lqs k_2/2-1-v^*$), so that
\begin{multline*}
  \sum_{r=0}^{k_2/2-1-v^* } \mathcal A_v^{k_2}(r) \G(k/2+k_2/2-1+u-r) \\
  = (-1)^{k_2/2}\sum_{r=0}^{k_2/2-1-v^* } (-1)^{r}  \binom{k_2/2-v}{r} \binom{k_2/2-1+v}{r} r! (k/2+k_2/2-2+u-r)! \\
   = (-1)^{k_2/2} (v+k_2/2-1)! (u-v+k/2-1)! \sum_{r=0}^{k_2/2-1- v^*} (-1)^{r}  \binom{k_2/2-v}{k_2/2-v-r} \binom{k/2+k_2/2-2+u-r}{u-v+k/2-1} \\
   = (-1)^{v} (v+k_2/2-1)! (u-v+k/2-1)! \sum_{t} (-1)^{t}  \binom{k_2/2-v}{t} \binom{(k/2-2+u+v)+t}{u-v+k/2-1}.
\end{multline*}
Using the identity (which may be proved  as in Lemma \ref{zl})
\begin{equation}\label{combin}
\sum_t (-1)^t \binom{a}{t} \binom{b+t}{c} = (-1)^a \binom{b}{c-a}
\end{equation}
and $\zeta(2n)= 2^{2n-1}\pi^{2n} \rho(2n)$ we obtain
\begin{equation}
F_{k_1, k_2}(l;u, v)  = \frac{(-1)^{k_2/2}  \rho(2u) \sigma_{2v-1}(l)}
{2 (4\pi l)^{k/2-1} l^{u+v}} \G(s) \G(w).
\label{d9}
\end{equation}
Clearly
$$
\int_0^\infty \Lambda_1(y) e^{-2 \pi l y} y^{k-2} \, dy = F_{k_1, k_2}(l;u, v) + F_{k_1, k_2}(l;1-u, v)+
F_{k_2, k_1}(l;v,u)+F_{k_2, k_1}(l;1-v,u)
$$
and the Proposition follows.
\end{proof}

\subsubsection{Calculating $\Psi_3(s,w;l)$}
\begin{prop} \label{psi333}
For $s,w$ of opposite parity and satisfying $1 \lqs s,w \lqs k-1$
\begin{multline*}
\Psi_3(s,w;l) = - \frac{\sigma_{k-1}(l)}{(k-1) \rho(k)} \Bigl[ \left(\delta_{w,1}(-1)^{(k-s)/2} + \delta_{w,k-1}(-1)^{s/2}\right)\G(s)\G(k-s)\rho(s)\rho(k-s) \\
+ \left(\delta_{s,1}(-1)^{(k-w)/2} + \delta_{s,k-1}(-1)^{w/2}\right)\G(w)\G(k-w)\rho(w)\rho(k-w)\Bigr].
\end{multline*}
\end{prop}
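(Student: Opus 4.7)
}
The plan is a direct unwinding: $\Lambda_3(y)$ is just a constant times $\sigma_{k-1}(l)e^{-2\pi l y}$, so $\Psi_3$ reduces to an elementary Laplace integral multiplied by a product of gamma factors coming from $\lambda(s,w)$. First I would compute
\[
\int_0^\infty e^{-4\pi l y}y^{k-2}\,dy=\frac{(k-2)!}{(4\pi l)^{k-1}},
\]
substitute $\zeta(k)=2^{k-1}\pi^k\rho(k)$ (which follows from (\ref{rho})) and use $(2\pi i)^k=(-1)^{k/2}(2\pi)^k$ to gather constants. After these reductions and absorbing the prefactor $(-1)^{k_2/2}2^{k-1}\pi^{k/2-1}l^{k-1}$ in the definition of $\Psi_3$, using $(-1)^{k_2/2}(-1)^{k/2}=(-1)^{k_1/2}$ since $k_2$ is even, one arrives at
\[
\Psi_3(s,w;l)=-\frac{(-1)^{k_1/2}\,\lambda(s,w)\,\sigma_{k-1}(l)}{2^{k-2}\pi^{k/2}(k-1)\rho(k)}.
\]

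The key observation is that each of the four Kronecker deltas in $\lambda(s,w)$ activates only at a corner of the admissible region (\ref{d7}), which \emph{forces} a specific choice of $k_1,k_2$. Concretely: when $w=k-1$ one has $u+v=k/2$, so the upper bounds in (\ref{d7}) give $u=k_1/2$, $v=k_2/2$, whence $k_1=s$, $k_2=k-s$. Similarly $w=1$ forces $k_1=k-s$, $k_2=s$; $s=k-1$ forces $k_1=w$, $k_2=k-w$; and $s=1$ forces $k_1=k-w$, $k_2=w$. Since $s,w$ have opposite parity and $1\leq s,w\leq k-1$, at most one delta is active at a time, so there is no overlap to worry about.

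In each case the $\theta$-product simplifies uniformly. Using $\zeta(2n)=2^{2n-1}\pi^{2n}\rho(2n)$, one checks the key identity
\[
\theta_m(m/2)=\pi^{-m/2}\Gamma(m)\zeta(m)=2^{m-1}\pi^{m/2}\Gamma(m)\rho(m).
\]
In the case $w=k-1$ this gives $\theta_{k_1}(u)\theta_{k_2}(v)=\theta_s(s/2)\theta_{k-s}((k-s)/2)=2^{k-2}\pi^{k/2}\Gamma(s)\Gamma(k-s)\rho(s)\rho(k-s)$, with $(-1)^{k_1/2}=(-1)^{s/2}$. The remaining three cases are handled identically, yielding $\Gamma(s)\Gamma(k-s)\rho(s)\rho(k-s)$ times $(-1)^{(k-s)/2}$ for $w=1$, and $\Gamma(w)\Gamma(k-w)\rho(w)\rho(k-w)$ times $(-1)^{w/2}$ or $(-1)^{(k-w)/2}$ for $s=k-1$ or $s=1$, respectively. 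Summing the four contributions and factoring out the common $-\sigma_{k-1}(l)/((k-1)\rho(k))$ produces exactly the formula of the proposition.

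There is no real obstacle: the computation is purely bookkeeping once one recognizes that the $(-1)^{k_2/2}$ prefactor conspires with the $(-1)^{k/2}$ from $(2\pi i)^k$ and the case-forced value of $k_1$ to produce the signs $(-1)^{s/2}, (-1)^{(k-s)/2}, (-1)^{w/2}, (-1)^{(k-w)/2}$ appearing in the statement. The only point requiring care is verifying that in each corner case the allowed region (\ref{d7}) rigidly determines $k_1,k_2$, so that the $\theta_{k_j}$ values become the clean integer-indexed zeta-gamma expressions above rather than leaving spurious $k_1,k_2$ dependence.
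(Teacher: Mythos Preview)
Your proof is correct and follows essentially the same approach as the paper: compute the elementary Laplace integral, then observe that each active Kronecker delta rigidly determines $k_1,k_2$ (the paper does this explicitly for $w=k-1$ and says ``the other terms behave similarly''), so the $\theta$-products reduce to $\Gamma$'s and $\zeta$'s at integer arguments. The only cosmetic difference is that you convert $\zeta$ to $\rho$ immediately via $\theta_m(m/2)=2^{m-1}\pi^{m/2}\Gamma(m)\rho(m)$, whereas the paper keeps $\zeta$ in the simplified $\lambda(s,w)$ and applies the ratio identity $\zeta(n)\zeta(k-n)/\zeta(k)=\rho(n)\rho(k-n)/(2\rho(k))$ at the end; and you are more explicit than the paper about how the sign $(-1)^{k_1/2}$ (coming from $(-1)^{k_2/2}\cdot(-1)^{k/2}$) acquires the case-dependent values $(-1)^{s/2}$, $(-1)^{(k-s)/2}$, $(-1)^{w/2}$, $(-1)^{(k-w)/2}$.
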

\begin{proof}
It is easy to show that
$$
\int_0^\infty \Lambda_3(y) e^{-2 \pi l y} y^{k-2} \, dy = -\lambda(s,w) \frac{(-1)^{k/2} (2\pi)^k \sigma_{k-1}(l)}{(k-1) \zeta(k) (4\pi l)^{k-1}}.
$$
Also $\lambda(s,w)$ simplifies a good deal. For example, when $w=k-1$ we have $u=s/2$ and $v=(k-s)/2$. Since $k_1$ is chosen (recall Lemma \ref{1swk}) so that $u^*=s/2-1<k_1/2$ it follows that $k_1=s$ and  $k_2=k-s$. Therefore $$
w=k-1 \implies \theta_{k_1}(u)\theta_{k_2}(v)= \pi^{-k/2} \G(s)\G(k-s)\zeta(s)\zeta(k-s).
$$
The other terms in $\lambda(s,w)$ behave similarly and
$$
\lambda(s,w)=\pi^{-k/2}\left[ (\delta_{w,1} + \delta_{w,k-1})\G(s)\G(k-s)\zeta(s)\zeta(k-s) + (\delta_{s,1} + \delta_{s,k-1})\G(w)\G(k-w)\zeta(w)\zeta(k-w)\right].
$$
Finally, noting that for any $n \in 2\Z$
$$
\frac{\zeta(n) \zeta(k-n)}{\zeta(k)} = \frac{\rho(n) \rho(k-n)}{2\rho(k)}
$$
we obtain the proposition.
\end{proof}


\subsubsection{Calculating $\Psi_2(s,w;l)$}
Recall the definition (\ref{zdef}) of the polynomial  $Z_{s,w}(x)$.
\begin{prop} \label{psi222}
For $s,w$ of opposite parity and satisfying $1 \lqs s,w \lqs k-1$
\begin{equation}
\Psi_2(s,w;l) = 2 (-1)^{k/2} (k-2)! l^{k-1-w} \sum_{n=1}^{l-1} \sigma_{2u-1}(n)\sigma_{2v-1}(l-n)  Z_{s,w}(n/l).
\end{equation}
\end{prop}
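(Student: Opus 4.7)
The plan is to substitute the explicit Fourier expansion
\[
e_k(n;y,u) = \frac{\sigma_{2u-1}(n)}{n^u}e^{-2\pi ny}\sum_{r=0}^{k/2-1-u^*}\mathcal{A}_u^k(r)(4\pi ny)^{-r+k/2}
\]
into the definition of $\Lambda_2(y)$. The exponentials from $e_{k_1}(n;y,u)$ and $e_{k_2}(l-n;y,v)$ combine with the outer $e^{-2\pi ly}$ to give $e^{-4\pi ly}$, and the power of $y$ in each term of the resulting double sum indexed by $r_1,r_2$ is $y^{k-2-r_1-r_2}$. Hence the $y$-integral is elementary, producing $\G(k-1-r_1-r_2)/(4\pi l)^{k-1-r_1-r_2}$ term by term.

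After collecting prefactors in the definition of $\Psi_2(s,w;l)$, writing $n=lt$, $l-n=l(1-t)$, and using $u+v = w-k/2+1$ from (\ref{uvsw}) together with $k_1/2+k_2/2 = k/2$, the powers of $l$ telescope to $l^{k-1-w}$ and the powers of $4\pi$ reduce to an overall constant factor $2$. Consequently, proving Proposition \ref{psi222} is equivalent to the polynomial identity
\[
\sum_{r_1,r_2}\mathcal{A}^{k_1}_u(r_1)\mathcal{A}^{k_2}_v(r_2)\G(k-1-r_1-r_2)\, t^{k_1/2-u-r_1}(1-t)^{k_2/2-v-r_2} \;=\; (-1)^{k_1/2}(k-2)!\, Z_{s,w}(t).
\]

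I would verify this by comparing the coefficient of $t^R$ on each side. On the LHS, expanding $(1-t)^{k_2/2-v-r_2}$ by the binomial theorem and inserting the closed form (\ref{akvr}) yields a double sum over $r_1,r_2$ of four binomial factors, a gamma factor $\G(k-1-r_1-r_2)$ and an overall sign. Regrouping via the elementary identity $\binom{n}{r}\binom{n-r}{p} = \binom{n}{r+p}\binom{r+p}{r}$ prepares the inner sum for a single application of the Vandermonde-type identity (\ref{combin}), which collapses the $r_2$-sum; a second application of (\ref{combin}) then collapses the remaining $r_1$-sum. The result is $(-1)^{k_2/2+u+R}(s-1)!(k-1-s)!\binom{k-1-w}{R}\binom{w-1+R}{k-1-s}$. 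Matching this against the $t^R$ coefficient read off from (\ref{zdef}) and using the identity $s+w=2u+k-1$ (so that $(-1)^{(s+w+1)/2}=(-1)^{u+k/2}$ and $(-1)^{k_1/2+u+k/2}=(-1)^{k_2/2+u}$) completes the proof.

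The main obstacle is the final combinatorial collapse: several signs, factorials and summation ranges must be tracked simultaneously, and one must allow the binomial coefficients to vanish automatically outside their nominal ranges rather than imposing those ranges by hand. A useful consistency check is that the final expression must be independent of the particular choice of $(k_1,k_2)$ permitted by Lemma \ref{1swk}, even though the intermediate double sum manifestly depends on this choice.
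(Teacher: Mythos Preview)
Your reduction to the polynomial identity is correct and coincides with the paper's: defining $Y_{s,w}(x)$ to be $(-1)^{k_2/2}$ times your double sum, the paper also reduces everything to showing $Y_{s,w}(x)=(-1)^{k/2}(k-2)!\,Z_{s,w}(x)$. The difference lies in how this identity is established. You keep $(k_1,k_2)$ general and propose collapsing first the $r_2$-sum and then the $r_1$-sum, each by an application of (\ref{combin}); this works (the $r_2$-collapse goes through exactly as in the paper's $b$-sum with $k-2$ replaced by $k-2-r_1$, and the resulting $r_1$-sum has the same shape), though the bookkeeping is heavier than your sketch suggests. The paper instead turns your final ``consistency check'' into the main simplification: since $\Psi_1$ and $\Psi_3$ have already been computed in Propositions \ref{psi111} and \ref{psi333} and are visibly independent of the admissible pair $(k_1,k_2)$, so must $\Psi_2$ be; one is therefore free to \emph{choose} $k_1/2=u^*+1$, which forces the $r_1$-sum to the single term $r_1=0$ with $\mathcal A_u^{k_1}(0)=(-1)^{k_1/2}$. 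A single application of (\ref{combin}) then suffices, at the cost of treating the cases $s+w<k$ and $s+w>k$ separately and checking that the two resulting expressions (\ref{zz1}), (\ref{zz2}) agree. Your route is more self-contained (no appeal to Propositions \ref{psi111}, \ref{psi333}) but computationally heavier; the paper's is slicker but logically downstream of those earlier results.
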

\begin{proof}
For each $n$ between $1$ and $l-1$
\begin{multline*}
  \int_0^\infty e_{k_1}(n; y, u)e_{k_2}(l-n; y, v) e^{-2 \pi l y} y^{k/2-2} \, dy
  = \frac{\sigma_{2u-1}(n)}{n^u} \frac{\sigma_{2v-1}(l-n)}{(l-n)^v} \\
 \times \sum_{a=0}^{k_1/2-1- u^*}\sum_{b=0}^{k_2/2-1- v^*} \mathcal A_u^{k_1}(a) \mathcal A_v^{k_2}(b)
 \int_0^\infty  e^{-4 \pi l y} y^{k/2-1} (4\pi n y)^{-a+k_1/2} (4\pi (l-n) y)^{-b+k_2/2}    \, \frac{dy}y  \\
   = \frac{\sigma_{2u-1}(n)\sigma_{2v-1}(l-n) l^{-w}}{(4\pi)^{k/2-1} }  \sum_{a,b} \mathcal A_u^{k_1}(a) \mathcal A_v^{k_2}(b) \left(\frac{n}{l} \right)^{k_1/2-u-a} \left(1-\frac{n}{l} \right)^{k_2/2-v-b} \G(k-1-a-b).
\end{multline*}
Thus we have
$$
\Psi_2(s,w;l) = 2 \cdot  l^{k-1-w} \sum_{n=1}^{l-1} \sigma_{2u-1}(n)\sigma_{2v-1}(l-n) Y_{s,w}(n/l)
$$
on setting
\begin{equation}\label{ysw}
Y_{s,w}(x):= (-1)^{k_2/2} \sum_{a,b} \mathcal A_u^{k_1}(a) \mathcal A_v^{k_2}(b) x^{k_1/2-u-a} (1-x)^{k_2/2-v-b} (k-2-a-b)!
\end{equation}
To complete the proof we need to demonstrate that
$
 Y_{s,w}(x) = (-1)^{k/2}(k-2)! Z_{s,w}(x)
$.
We see from (\ref{omega2}) and Propositions \ref{psi111}, \ref{psi333} that $\Psi_2(s,w;l)$ must be independent of the choice of $k_1$, $k_2$ satisfying (\ref{d7}).
Choose $k_1$ so that $k_1/2-1- u^*=0$ to simplify (\ref{ysw}). Thus $a=0$ and, with (\ref{akvr}), $\mathcal A_u^{k_1}(0)=(-1)^{k_1/2}$. Hence
$$
Y_{s,w}(x)= (-1)^{k/2}x^{k_1/2-u} \sum_{b=0}^{k_2/2-1- v^*}  \mathcal A_v^{k_2}(b) (1-x)^{k_2/2-v-b} (k-2-b)!
$$
Use the  formula (\ref{akvr}) and the binomial expansion of $(1-x)^{k_2/2-v-b}$  to obtain
\begin{multline*}
Y_{s,w}(x) = (-1)^{k/2+k_2/2} (k_2/2-v)! x^{k_1/2-u}
\sum_{r=0}^{k_2/2-v-b} (k-k_2/2-2+v+r)! (-x)^r/r! \\
\times \sum_b (-1)^b \binom{k_2/2-1+v}{k_2/2-1+v-b} \binom{k-2-b}{k-k_2/2-2+v+r}.
\end{multline*}
The combinatorial identity  (\ref{combin}), substituting $t$ for $k_2/2-1+v-b$, evaluates the inner sum over $b$ and
$$
Y_{s,w}(x) = (-1)^{u^*+1} \frac{(k-2)!}{ \binom{k-2}{k/2+u^*-v}} x^{u^*+1-u}
\sum_r (-x)^r \binom{k/2-1-u^*-v}{r} \binom{k/2-1+u^*+v+r}{k/2-2-u^*+v}
$$
where we replaced $k_1/2$ by $u^*+1$ and $k_2/2$ by $k/2-u^*-1$. Recall that if $u \lqs 0$ (equivalent to $s+w<k$) then $u^*=-u$. If $u \gqs 1$ ( $s+w>k$) then $u^*=u-1$. Therefore $(-1)^{k/2} Y_{s,w}(x)/(k-2)!$ equals
\begin{eqnarray}
    (-1)^{(s+w-1)/2} x^{k-s-w} \binom{k-2}{w-1}^{-1}   \sum_t (-x)^t \binom{s-1}{t} \binom{k-1-s+t}{w-1}  & & \text{if \ \ } s+w<k, \label{zz1}\\
    (-1)^{(s+w+1)/2}  \binom{k-2}{s-1}^{-1}  \sum_r (-x)^r \binom{k-1-w}{r} \binom{w-1+r}{k-1-s}  & & \text{if \ \ } s+w>k. \label{zz2}
\end{eqnarray}
The change of variables $r=t+k-s-w$ in (\ref{zz1}) implies (\ref{zz1}) equals (\ref{zz2}) for all $s$ and $w$.
This completes the proof of the proposition.
\end{proof}

\section{Applications of Theorem \ref{rmrn2}}
\subsection{The Kohnen Zagier formula}
Specializing Theorem \ref{rmrn2} to $l=1$ and with $s$, $w$ replaced by $m+1$ and $n+1$ we retrieve Kohnen and Zagier's formula.
To state their result, recall that $\tilde m:=k-2-m$, $\tilde n:=k-2-n$.
\begin{theorem} \cite{KZ} \label{rmrn}
For integers $m,n$ of opposite parity with $0 \lqs m,n \lqs k-2$
\begin{eqnarray}
2^{2-k}(k-2)!\bigl \langle R_m, R_n \bigr \rangle & = &
 \rho(m - \tilde n + 1) m! n!
 +
 \rho(-m +  \tilde n + 1)  \tilde m! \tilde n! \nonumber
\\ &  & +
(-1)^{k/2} \rho(m -  n + 1) m! \tilde n!
  +
(-1)^{k/2} \rho(-m + n + 1)  \tilde m! n! \label{www}
\end{eqnarray}
where, if $m$ or $n$ equals $0$ or $k-2$, we must add
$$
(-1)^{\frac{m-1}{2}}\frac{m! \tilde m! \rho(m+1) \rho(\tilde m +1)}{(k-1) \rho(k)}
\bigl( (-1)^{k/2}\delta_{n,0} +  \delta_{n,k-2} \bigr) +
(-1)^{\frac{n-1}{2}}\frac{n! \tilde n! \rho(n+1) \rho(\tilde n +1)}{(k-1) \rho(k)}
\bigl( (-1)^{k/2}\delta_{m,0} +  \delta_{m,k-2} \bigr)
$$
to the right side of (\ref{www}).
\end{theorem}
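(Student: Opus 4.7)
The plan is to derive Theorem \ref{rmrn} as a direct specialization of Theorem \ref{rmrn2} by taking $l=1$, $s=m+1$ and $w=n+1$. Since $T_1$ acts as the identity on $S_k$ and $R_m = \doh_k(\cdot,m+1)$, $R_n = \doh_k(\cdot,n+1)$, the left-hand side of (\ref{bigformula}) becomes $(k-2)!\,2^{2-k}\langle R_m,R_n\rangle$, exactly matching the left-hand side of (\ref{www}). On the right, the divisor factors $\sigma_{2u-1}(1)$, $\sigma_{2v-1}(1)$, $\sigma_{k-1}(1)$ all equal $1$, and the middle sum $\Psi_2$ vanishes because the index range $1\lqs n\lqs l-1$ is empty; thus only the $\Psi_1$- and $\Psi_3$-pieces contribute.

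The remainder is bookkeeping. From (\ref{uvsw}) we compute $2u = m+n-k+3 = m-\tilde n+1$ and $2v = n-m+1$, so $2-2u = -m+\tilde n+1$ and $2-2v = m-n+1$. Using $\G(s)=m!$, $\G(w)=n!$, $\G(k-s)=\tilde m!$, $\G(k-w)=\tilde n!$, the four summands of the $\Psi_1$-part of (\ref{bigformula}) reproduce, line for line, the four main summands of (\ref{www}); the powers of $l$ are all $1$ and do not interfere. In particular $\rho(2u)\mapsto\rho(m-\tilde n+1)$, $\rho(2-2u)\mapsto\rho(-m+\tilde n+1)$, $\rho(2v)\mapsto\rho(-m+n+1)$ and $\rho(2-2v)\mapsto\rho(m-n+1)$, with the appropriate factorial products attached.

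For the boundary corrections one reads off $\Psi_3(m+1,n+1;1)$. The Kronecker deltas translate as $\delta_{s,1}=\delta_{m,0}$, $\delta_{s,k-1}=\delta_{m,k-2}$, $\delta_{w,1}=\delta_{n,0}$, $\delta_{w,k-1}=\delta_{n,k-2}$, and the products $\G(s)\G(k-s)\rho(s)\rho(k-s) = m!\,\tilde m!\,\rho(m+1)\rho(\tilde m+1)$, with the analogous identity in $n$, recover the Gamma and $\rho$ content of the stated correction. The main obstacle is sign-matching: one must reconcile the factors $(-1)^{(k-s)/2}$, $(-1)^{s/2}$, $(-1)^{(k-w)/2}$, $(-1)^{w/2}$ appearing in (\ref{bigformula}) with the factors $(-1)^{(m-1)/2}$, $(-1)^{(n-1)/2}$ and $(-1)^{k/2}$ of (\ref{www}). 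Because $m$ and $n$ have opposite parity and $k$ is even, each of the four boundary scenarios (whether $m$ or $n$ equals $0$ or $k-2$) fixes the parities of $s$ and $w$ uniquely, and a short mod-$2$ calculation in each case — for instance, when $m=0$ one has $s=1$ and $w=n+1$ with $n$ odd, so $(k-w)/2\equiv(n-1)/2+k/2+1\pmod2$ after absorbing the overall minus sign — shows that the $\delta$-terms of $\Psi_3$ coincide with the stated correction. Once the four signs align, the theorem follows.
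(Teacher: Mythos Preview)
Your proposal is correct and follows exactly the paper's approach: the paper derives Theorem \ref{rmrn} simply by specializing Theorem \ref{rmrn2} to $l=1$, $s=m+1$, $w=n+1$, and your write-up just fills in the bookkeeping (the $\Psi_2$ sum is empty, the divisor sums are $1$, and the $\Psi_1$, $\Psi_3$ terms match the main and boundary terms respectively). Your sign verification for the boundary piece is the only nontrivial check, and it goes through as you indicate.
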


 Lanphier in  \cite{Lanp} uncovers combinatorial connections between the raising operators (\ref{wlr}) of Maass and Shimura and the Rankin-Cohen bracket (\ref{rcb}). This leads to another proof of Theorem \ref{rmrn}. For example, with (\ref{qwerty}) we may also express Theorem \ref{rmrn2}, specialized to $l=1$, as
$$
\sum_{f \in \mathcal B_k} \frac{L^*(f,s) L^*(f,w)}{\s{f}{f}} = \frac{\Psi(s,w;1)}{2^{2-k} (k-2)!}.
$$
This is \cite[Corollary 3]{Lanp}\footnote{The right side in the statement of that Corollary should be multiplied by a missing $-B_l/(2l)$.}.
The general case of Theorem \ref{rmrn2} may be formulated as
\begin{equation}\label{sm}
\sum_{f \in \mathcal B_k} \frac{\lambda_f(l) L^*(f,s) L^*(f,w)}{\s{f}{f}} = \frac{\Psi(s,w;l)}{2^{2-k} (k-2)!}
\end{equation}
with the rational number $\Psi(s,w;l)$  given by the right side of (\ref{bigformula}). See \cite{Za4} for a different treatment of the left side of (\ref{sm}).

\subsection{Ramanujan-style identities} \label{arit}
The right side of Theorem \ref{rmrn2} must evaluate to $0$ for $k=4,6,8,10,14$ since, for these weights, cusp forms do not exist. The resulting identities may be verified and serve to check the statement of the theorem.  For $k=12$ we must have $\bigl\langle T_l \doh(\cdot,s), \doh(\cdot, w)\bigr\rangle/\bigl\langle  \doh(\cdot,s), \doh(\cdot, w)\bigr\rangle = \tau(l)$, Ramanujan's tau function. Choosing $u=v=1$ for example yields
\begin{equation}\label{raman}
(6l-1) \sigma_1(l) -5\sigma_3(l)+  12\sum_{n=1}^{l-1} \sigma_1(n) \sigma_1(l-n)  = 0
\end{equation}
when $k=4$, an identity of Ramanujan \cite[(2)]{Ra}.  Another proof of (\ref{raman}) using holomorphic projection appears in \cite[p 288]{Za}.
For $6 \lqs k \lqs 14$ we obtain
 equalities involving only  $\sigma_1$ and $\tau$:
\begin{eqnarray*}
 \sum_{n=1}^{l-1} \sigma_1(n) \sigma_1(l-n) \Bigl[ l-2n\Bigr] &=& 0, \quad (k=6) \\
(l-1)l^2 \sigma_1(l) +12 \sum_{n=1}^{l-1} \sigma_1(n) \sigma_1(l-n) \Bigl[ l^2-5ln +5n^2 \Bigr] &=& 0, \quad (k=8) \\
\sum_{n=1}^{l-1} \sigma_1(n) \sigma_1(l-n) \Bigl[ l^3-9l^2 n+21 l n^2-14 n^3 \Bigr] &=& 0, \quad (k=10) \\
   \frac 13 (5-2l)l^4 \sigma_1(l) -  20\sum_{n=1}^{l-1} \sigma_1(n) \sigma_1(l-n) \Bigl[l^4-14 l^3 n +56 l^2 n^2 -84 l n^3 +42 n^4\Bigr]
   &=& \tau(l), \quad (k=12)\\
\sum_{n=1}^{l-1} \sigma_1(n) \sigma_1(l-n) \Bigl[ l^5-20 l^4 n+120 l^3 n^2 -300 l^2 n^3 +330 l n^4-132 n^5 \Bigr] &=& 0. \quad (k=14)
\end{eqnarray*}
Niebur's  formula \cite{Ni}
$$
 l^4 \sigma_1(l) -  24\sum_{n=1}^{l-1} \sigma_1(n) \sigma_1(l-n) \Bigl[18 l^2 n^2 -52 l n^3 +35 n^4\Bigr]
   = \tau(l)
$$
is a linear combination of the above equalities with $k=6,8,10,12$. See also \cite[(9.5c)]{OS}, for example.

\subsection{The Periods Theorem} \label{manin}
Let $f\in \mathcal B_k$ and $K_f$ the field obtained by adjoining the coefficients $a_f(n)$ to $\Q$. Then $K_f \subset \R$   because $T_n$ is self adjoint. (From (\ref{saj}) below it follows that $K_f$ is totally real.) Let $g_j$ for $1 \lqs j \lqs d$ be a Miller basis for $S_k$, see \cite[Chapter X, Theorem 4.4]{La}. The Fourier coefficients of $g_i$ are in $\Z$ and of the first $d$  coefficients, only the $j$th is non-zero (it equals $1$).

Since $T_n$ maps the column vector $(g_1, \cdots, g_d)^T$ to $[T_n](g_1, \cdots, g_d)^T$ for $[T_n]$ a $d\times d$ matrix with entries in $\Z$, we see that the eigenvalues of $T_n$ are roots of a degree $d$ polynomial and hence the degree of any element of $K_f$ over $\Q$ is at most $d$. This is \cite[Theorem 3]{Za}.
Also
\begin{equation}\label{miller}
f=\sum_{j=1}^d \lambda_f(j) g_j
\end{equation}
and it follows that $K_f$ is a finite extension of $\Q$ with $[K_f:\Q] \lqs d^d$.

\vskip 3mm
We now prove Manin's Periods Theorem \cite{Ma1}, in the slightly more precise form of \cite[p 202]{KZ}. See also Shimura's general result \cite[Theorem 1]{Sh}.

\begin{theorem}  \label{per}
Given $f \in \mathcal B_k$ there exist $\omega_+(f), \ \omega_-(f) \in \R$ such that $\omega_+(f) \omega_-(f) = \bigl\langle f, f \bigr\rangle$ and
$$
L^*(f,s)/\omega_+(f), \quad L^*(f,w)/\omega_-(f) \in K_f
$$
for all $s,w$ with $1 \lqs s, w \lqs k-1$ and $s$ even, $w$ odd.
\end{theorem}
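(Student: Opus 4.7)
The plan is to combine Theorem \ref{rmrn2}, in the trace form (\ref{sm}), with a Galois--Hecke argument. For $g \in \mathcal B_k$ and $s,w$ of opposite parity in $[1,k-1]$ set
\[
\alpha_{s,w}(g) := L^*(g,s)\, L^*(g,w)/\s{g}{g} \in \R,
\]
so that (\ref{sm}) reads $\sum_{g \in \mathcal B_k} \lambda_g(l)\,\alpha_{s,w}(g) \in \Q$ for every $l\gqs 1$. By multiplicity one I can pick $d := |\mathcal B_k|$ indices $l_1, \ldots, l_d$ so that the matrix $M = [\lambda_{g_i}(l_j)]$ is invertible; its entries lie in the number field $L$ obtained as the compositum of the fields $K_{g_i}$. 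Cramer's rule then expresses each $\alpha_{s,w}(g_i)$ as a ratio of determinants with entries in $L$, giving $\alpha_{s,w}(g_i) \in L \subset \overline{\Q}$.

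Next comes a standard Galois descent. Any $\sigma \in \mathrm{Gal}(\overline{\Q}/\Q)$ permutes $\mathcal B_k$ via $g \mapsto g^\sigma$ with $\sigma(\lambda_g(l)) = \lambda_{g^\sigma}(l)$. Applying $\sigma$ to the identity $\sum_g \lambda_g(l)\, \alpha_{s,w}(g) = \beta_l \in \Q$ and reindexing the sum yields $\sum_g \lambda_g(l)\, \sigma(\alpha_{s,w}(g^{\sigma^{-1}})) = \beta_l$, so by uniqueness of the solution $\sigma(\alpha_{s,w}(g)) = \alpha_{s,w}(g^\sigma)$. In particular $\alpha_{s,w}(f)$ is fixed by the stabilizer of $f$, hence $\alpha_{s,w}(f) \in K_f$.

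Now fix $s_0$ even and $w_0$ odd in $[1,k-1]$ with $L^*(f,s_0) \ne 0$ and $L^*(f,w_0) \ne 0$, and set
\[
\omega_+(f) := L^*(f,s_0), \qquad \omega_-(f) := \s{f}{f}/L^*(f,s_0).
\]
Both are real because $f$ has real Fourier coefficients (so $L^*(f,s_0) = \int_0^\infty f(iy)y^{s_0-1}\,dy \in \R$), and $\omega_+\omega_- = \s{f}{f}$ by construction. For odd $w$ we have $L^*(f,w)/\omega_-(f) = \alpha_{s_0,w}(f) \in K_f$; for even $s$ with $L^*(f,s) \ne 0$, $L^*(f,s)/\omega_+(f) = \alpha_{s,w_0}(f)/\alpha_{s_0,w_0}(f) \in K_f$ (the denominator being nonzero by the choice of $s_0,w_0$), while for $L^*(f,s)=0$ the ratio is trivially in $K_f$.

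The main obstacle is the non-vanishing step: guaranteeing, for every $f \in \mathcal B_k$, the existence of at least one even and at least one odd integer in $[1,k-1]$ at which $L^*(f,\cdot)$ does not vanish. This is not a direct consequence of (\ref{sm}) and would have to be established separately, e.g.\ by invoking the classical fact that the period polynomial of a Hecke eigenform has nonzero even and odd parts.
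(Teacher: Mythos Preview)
Your approach is essentially the paper's. Both routes use Theorem~\ref{rmrn2}, in the trace form (\ref{sm}), to show that $L^*(f,s)L^*(f,w)/\s{f}{f} \in K_f$ for $s,w$ of opposite parity in $[1,k-1]$, and then bootstrap from one fixed pair. The paper packages the rationality by observing that the cusp form $H_{s,w}$ with $l$th Fourier coefficient $\Psi(s,w;l)/[2^{2-k}(k-2)!]$ satisfies $\s{f}{H_{s,w}} = L^*(f,s)L^*(f,w)$ and has rational coefficients, and then invokes Lemma~\ref{shim} to get $\s{f}{H_{s,w}}/\s{f}{f} \in K_f$. The proof of Lemma~\ref{shim} is exactly your Galois-descent argument, phrased via the Miller basis rather than via Cramer's rule on the Hecke matrix; so the two arguments are the same at the level of ideas.

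The gap you flag has a one-line fix, and it is precisely what the paper does: take $s_0 = k-2$ and $w_0 = k-1$. Since $k$ is even these have the required parities, and both lie in the half-plane $\Re(s) > k/2 + 1/2$ where the Euler product for $L(f,s)$ converges absolutely and is therefore nonzero. (Whenever $S_k \neq 0$ one has $k \gqs 12$, so $k-2 > k/2+1/2$.) Hence $L^*(f,k-2)\,L^*(f,k-1) \neq 0$, and no appeal to period polynomials or any external non-vanishing input is needed. Your suggested route through the nonvanishing of even/odd period polynomials would in fact be circular here, since that statement is usually deduced \emph{from} the Periods Theorem.
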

\begin{proof}
Set
$$
H_{s,w}(z):=\pi_{hol}\Bigl[ (-1)^{k_2/2} y^{-k/2}E^*_{k_1}(z,  u)E^*_{k_2}(z,  v)/(2\pi^{k/2}) \Bigr] \in S_k(\G).
$$
Then, recalling Proposition \ref{iprodprop}, we have
$
L^*(f,s) L^*(f,w) = \bigl\langle f, H_{\bar s,\bar w}\bigr\rangle.
$
By Theorem \ref{dsdw} we know
$$
\Bigl\langle (-1)^{k_2/2} y^{-k/2}E^*_{k_1}(z,  u)E^*_{k_2}(z,  v)/(2\pi^{k/2}), P_l\Bigr\rangle
=\frac{(k-2)!}{(4\pi l)^{k-1}} \overline{\bigl\langle T_l \doh_k(\cdot,s), \doh_k(\cdot,\overline w)\bigr\rangle}
$$
which implies
$$
H_{s,w}(z) = \sum_{l=1}^\infty \overline{\bigl\langle T_l \doh_k(\cdot,s), \doh_k(\cdot,\overline w)\bigr\rangle} e^{2\pi i l z}.
$$
Hence, for $s,w$ of opposite parity satisfying $1\lqs s,w \lqs k-1$, Theorem \ref{rmrn2} shows that
$
H_{s,w}(z)$ has rational Fourier coefficients.
For any $g \in S_k$ with rational Fourier coefficients and $f \in \mathcal B_k$ we have
$
\bigl\langle f, g \bigr\rangle = c \bigl\langle f, f \bigr\rangle
$
with $c \in K_f$,  see Lemma \ref{shim} below. Thus
$$
L^*(f,k-2) L^*(f,k-1) = \bigl\langle f, H_{ k-2, k-1}\bigr\rangle = c_{f} \bigl\langle f, f \bigr\rangle
$$
for $c_f \in K_f$ and the left side is nonzero because the Euler products converge for $\Re(s)>k/2+1/2$.
Set
$$
\omega_+(f) := \frac{c_f \bigl\langle f, f \bigr\rangle}{L^*(f,k-1)}, \quad \omega_-(f) := \frac{\bigl\langle f, f \bigr\rangle}{L^*(f,k-2)}.
$$
Then, for $s$ even and $1 < s < k-1$,
$$
\frac{L^*(f,s)}{\omega_+(f)} = \frac{L^*(f,s)L^*(f,k-1)}{c_f \bigl\langle f, f \bigr\rangle} = \frac{\bigl\langle f, H_{s, k-1}\bigr\rangle}{c_{f} \bigl\langle f, f \bigr\rangle} = \frac{c'_{f} \bigl\langle f, f \bigr\rangle}{c_{f} \bigl\langle f, f \bigr\rangle}\in  K_f
$$
and similarly for $w$ odd, as required.
\end{proof}

The following lemma is implicit in the proofs of \cite{KZ}, \cite{Za3} and a special case of \cite[Lemma 4]{Sh}. Since the proof is short and instructive we include it for completeness.

\begin{lemma} \label{shim}
For any $g \in S_k$ with rational Fourier coefficients and $f \in \mathcal B_k$, a normalized Hecke form,
$$
\bigl\langle g, f \bigr\rangle / \bigl\langle f, f \bigr\rangle  \in K_f.
$$
\end{lemma}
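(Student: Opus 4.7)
The plan is to decompose $g$ in the Hecke eigenbasis and identify $\langle g, f\rangle/\langle f, f\rangle$ as the $f$-coefficient, then prove that coefficient lies in $K_f$ by Galois descent. First, since the Hecke operators $T_n$ are self-adjoint on $S_k$ and $\mathcal B_k$ has simple joint spectrum, distinct elements of $\mathcal B_k$ are pairwise orthogonal. Writing $g = \sum_{h \in \mathcal B_k} \alpha_h h$, this gives $\langle g, f\rangle = \alpha_f \langle f, f\rangle$, so the claim reduces to showing $\alpha_f \in K_f$.

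Next, I would pin $\alpha_f$ down algebraically using the Miller basis $g_1,\dots,g_d$ recalled just before the lemma. By the Miller property, $g = \sum_{j=1}^d a_g(j)\, g_j$ with $a_g(j) \in \Q$, and by (\ref{miller}) each $h \in \mathcal B_k$ reads $h = \sum_j \lambda_h(j)\, g_j$. Comparing expansions yields the linear system $M\alpha = a$, where $M_{j,h} := \lambda_h(j)$ and $a := (a_g(j))_j$; inverting the change-of-basis matrix gives $\alpha_f = \sum_{j=1}^d (M^{-1})_{f,j}\, a_g(j)$. In particular, $\alpha_f$ already lies in the finite Galois extension $K := \Q(\lambda_h(n) : h \in \mathcal B_k,\ n \gqs 1)$ of $\Q$.

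The crux is a Galois descent. Let $\sigma \in \mathrm{Gal}(K/\Q)$ fix $K_f$ pointwise. The natural action on $\mathcal B_k$ is the permutation $h \mapsto \sigma h$ determined by $\lambda_{\sigma h}(n) = \sigma(\lambda_h(n))$, and fixing $K_f$ forces $\sigma f = f$ because $K_f$ is generated by the $\lambda_f(n)$. Applying $\sigma$ entrywise to $M$ gives $\sigma(M)_{j,h} = M_{j,\sigma h}$, so $\sigma(M) = MP$ for the permutation matrix $P$ of $\sigma$ on $\mathcal B_k$, and therefore $\sigma(M^{-1}) = P^{-1}M^{-1}$. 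Since $\sigma f = f$, the $f$-row of $P^{-1}$ is the standard basis row, so the $f$-row of $\sigma(M^{-1})$ agrees with that of $M^{-1}$. Combined with $\sigma(a_g(j)) = a_g(j)$, this yields $\sigma(\alpha_f) = \alpha_f$. As $\sigma$ was arbitrary, $\alpha_f \in K_f$.

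The main obstacle lies in the Galois bookkeeping: one needs the standard fact that $\sigma$ truly permutes $\mathcal B_k$ (i.e.\ $\sigma h$ is again a normalized Hecke eigenform with eigenvalues $\sigma(\lambda_h(n))$), and that fixing $K_f$ pins down the single form $f$ rather than an orbit. Both points are classical but deserve to be stated carefully; everything else is linear algebra over a finite Galois extension.
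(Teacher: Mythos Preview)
Your proposal is correct and takes essentially the same approach as the paper: both express $\langle g,f\rangle/\langle f,f\rangle$ as the $f$-coefficient of $g$ in the Hecke eigenbasis, place it in the compositum $K$ via the Miller basis, and then descend to $K_f$ using that any automorphism fixing $K_f$ permutes $\mathcal B_k$ and fixes $f$. The only cosmetic difference is that the paper phrases the descent via $g^\sigma=g$ and orthogonality, whereas you track it through the identity $\sigma(M)=MP$ on the change-of-basis matrix; these are the same computation.
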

\begin{proof}
Let $\sigma$ be any automorphism of $\C$, $z \mapsto z^\sigma$, necessarily fixing $\Q$. For any $h=\sum_{n=1}^\infty a(n) e^{2\pi i n z} \in S_k(\G)$ define $h^\sigma = \sum_{n=1}^\infty a(n)^\sigma e^{2\pi i n z}$.   Let $f \in \mathcal B_k$ and writing $f$ in terms of the Miller basis, as in (\ref{miller}), we find
\begin{equation}\label{saj}
  T_n(f^\sigma) = \sum_j \lambda_f(j)^\sigma \cdot T_n g_j = \sum_j \bigl( \lambda_f(j) \cdot T_n g_j \bigr)^\sigma = (T_n f)^\sigma = (\lambda_f(n) f)^\sigma =  \lambda_f(n)^\sigma f^\sigma.
\end{equation}
It follows that $f^\sigma \in \mathcal B_k$ also and thus $\sigma$ permutes the set $\mathcal B_k=\{f_i\}_{1 \lqs i \lqs d}$.
Let  $f=f_1$, say. By (\ref{miller}) we know $(f_1, \dots, f_d)^T=M (g_1, \dots, g_d)^T$ where the  $d \times d$ matrix $M$ has entries in $K=K_{f_1}K_{f_2} \cdots K_{f_d}$, as does $M^{-1}$. It follows that $g=\sum_i c_i f_i$ with  $c_i \in K$. Then $\bigl\langle g, f \bigr\rangle = c_1 \bigl\langle f, f \bigr\rangle$. Also, since $g^\sigma = g$,
$$
\bigl\langle g, f^\sigma \bigr\rangle = c_1^\sigma \bigl\langle f^\sigma, f^\sigma \bigr\rangle.
$$
Therefore $c_1^\sigma = c_1$ if $\sigma$ fixes the elements of $K_f$. Now $K$ is finite  extension of $K_f$, and normal since any embedding of $K$ in $\C$ permutes $\mathcal B_k$. Hence $c_1^\sigma = c_1$ for all $\sigma \in \operatorname{Gal}(K/K_f)$. The Galois correspondence then implies $c_1 \in K_f$.
\end{proof}

\subsection{Functional equations}
In this section we explore in detail the functional equations of both sides of (\ref{omega2}).
Define the symmetries $\alpha$, $\beta$ acting on pairs $(s,w) \in \C^2$ as follows:
$$
(s,w) \stackrel{\alpha}{\longrightarrow} (w,s), \qquad (s,w) \stackrel{\beta}{\longrightarrow} (k-s,w).
$$
They generate the Dihedral group with $8$ elements
$$
D_8 = \Bigl\langle \alpha, \beta : \alpha^2 = \beta^2 = (\alpha \beta)^4 = I \Bigr \rangle.
$$
Note that the effects of $\alpha$, $\beta$ on the pairs $(u,v)$ related to $(s,w)$ by (\ref{uvsw})  are
$$
(u,v) \longrightarrow (u,1-v), \qquad (u,v) \longrightarrow  (v,u)
$$
respectively. With  $\alpha$, $\beta$ acting on functions of $s,w$ via the left regular representation, we may describe the functional equations of
$\bigl\langle T_l \doh_k(\cdot,s), \doh_k(\cdot, w)\bigr\rangle$. With (\ref{entire}) and the functional equation (\ref{feqqq})
we obtain
\begin{equation}\label{feqqq9}
\doh_k(z,k-s)=(-1)^{k/2} \doh_k(z,s).
\end{equation}
Also we know that $T_l$ is self adjoint. Therefore
$$
\alpha \Bigl[\bigl\langle T_l \doh_k(\cdot,s), \doh_k(\cdot, w)\bigr\rangle\Bigr] = \overline{\bigl\langle T_l \doh_k(\cdot,s), \doh_k(\cdot, w)\bigr\rangle}, \quad \beta \Bigl[\bigl\langle T_l \doh_k(\cdot,s), \doh_k(\cdot, w)\bigr\rangle\Bigr] =
(-1)^{k/2} \bigl\langle T_l \doh_k(\cdot,s), \doh_k(\cdot, w)\bigr\rangle.
$$
Now for integers $s,w$ of opposite parity satisfying $1\lqs s,w \lqs k-1$ we see quickly from Propositions \ref{psi111} and \ref{psi333} that the $\Psi_i$ have the same functional equations for $i=1,3$:
\begin{equation}\label{oi}
\alpha \bigl[ \Psi_i(s,w;l)\bigr] = \Psi_i(s,w;l), \qquad \beta \bigl[ \Psi_i(s,w;l) \bigr] = (-1)^{k/2}\Psi_i(s,w;l).
\end{equation}
Hence (\ref{oi}) must be true for $i=2$ also. We verify this elegant symmetry directly.

\vskip 3mm
First, we note from the equality of (\ref{zz1}), (\ref{zz2}) that
$$
Z_{k-w,k-s}(x)=x^{s+w-k} Z_{s,w}(x).
$$
It follows that
$$
\beta  \alpha  \beta \bigl[ \Psi_2(s,w;l)\bigr] = \Psi_2(s,w;l).
$$
\begin{prop} \label{zl} We have
$$
Z_{k-s,w}(1-x)= (-1)^{k/2} Z_{s,w}(x).
$$
\end{prop}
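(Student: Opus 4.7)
My plan is to unwind the definition (\ref{zdef}) of $Z_{s,w}$ and reduce the symmetry claim to a purely polynomial identity, then prove that identity via generating functions.

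First I would write $Z_{s,w}(x) = (-1)^{(s+w+1)/2}\binom{k-2}{s-1}^{-1} P_{s,w}(x)$, where
$$P_{s,w}(x):=\sum_{r=0}^{k-1}(-x)^r\binom{k-1-w}{r}\binom{w-1+r}{k-1-s}.$$
Using $\binom{k-2}{k-s-1}=\binom{k-2}{s-1}$ and computing the sign ratio $(-1)^{(k-s+w+1)/2}/\bigl((-1)^{k/2}(-1)^{(s+w+1)/2}\bigr)=(-1)^{-s}=(-1)^s$, the claim becomes the cleaner identity
$$P_{k-s,w}(1-x)=(-1)^s P_{s,w}(x).$$

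Next, I would encode each inner binomial as a coefficient extractor $\binom{w-1+r}{j}=[y^j](1+y)^{w-1+r}$. Swapping the finite sums against this generating function, one obtains
$$P_{k-s,w}(1-x)=[y^{s-1}](1+y)^{w-1}\bigl(x+(x-1)y\bigr)^{k-1-w},$$
$$(-1)^s P_{s,w}(x)=(-1)^s[y^{k-1-s}](1+y)^{w-1}\bigl(1-x-xy\bigr)^{k-1-w}.$$
Both polynomials in $y$ inside the brackets have total degree exactly $(w-1)+(k-1-w)=k-2$. Therefore the trick $[y^{s-1}]Q(y)=[y^{k-1-s}]y^{k-2}Q(1/y)$ applies; carrying out the $y\mapsto 1/y$ substitution on the first line and multiplying through by $y^{k-2}$ transforms $(1+y)^{w-1}(x+(x-1)y)^{k-1-w}$ into $(1+y)^{w-1}(xy+x-1)^{k-1-w}$.

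At this point both sides are coefficients of $y^{k-1-s}$ in $(1+y)^{w-1}$ times a power of $\pm(xy+x-1)$, and since $1-x-xy=-(xy+x-1)$ the ratio of the two expressions is exactly $(-1)^{s+(k-1-w)}$. Because $k$ is even and $s,w$ have opposite parity, $s+k-1-w$ is even, so this sign is $+1$ and the identity holds. The only real obstacle is bookkeeping, primarily pinning down the sign $(-1)^s$ in the reduced identity and verifying the degree count $k-2$; the generating-function manipulation itself is essentially immediate once the coefficient-extraction viewpoint is adopted.
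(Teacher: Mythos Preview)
Your proof is correct and is essentially the paper's own argument: the paper introduces the generating function $p(x,y)=[1-x(1+y)]^{k-1-w}(1+y)^{w-1}$, expands it exactly as you do via $[y^t]$, and proves the identity by the substitution $y^{k-2}p(1-x,1/y)=(-1)^{w+1}p(x,y)$, which is your $y\mapsto 1/y$ trick. The only cosmetic difference is that the paper packages the $x\to 1-x$ and $y\to 1/y$ moves into a single functional equation for $p$, whereas you apply them sequentially; both routes use the opposite-parity hypothesis on $s,w$ (your $(-1)^{s+k-1-w}=1$, their implicit $(-1)^{w+1}=(-1)^s$) in the same way.
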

\begin{proof}
With (\ref{zdef}) we need to verify
\begin{equation}\label{ver}
    (-1)^{s}    \sum_r (-1+x)^r \binom{k-1-w}{r} \binom{w-1+r}{s-1}
    =   \sum_r (-x)^r \binom{k-1-w}{r} \binom{w-1+r}{k-1-s}.
\end{equation}
We define a generating function $p(x,y)$ as follows
\begin{eqnarray}
  p(x,y) &:=& [1-x(1+y)]^{k-1-w}(1+y)^{w-1} \nonumber \\
   &=& \sum_r (-x(1+y))^r (1+y)^{w-1} \binom{k-1-w}{r} \nonumber\\
   &=& \sum_{r,t} (-x)^r  \binom{k-1-w}{r} \binom{w-1+r}{t} y^t. \label{qo1}
\end{eqnarray}
With the  identity
$
y^{k-2}p(1-x,1/y) = (-1)^{w+1} p(x,y)
$
we also find
\begin{equation}\label{qo2}
p(x,y) = (-1)^{w+1} \sum_{r,t} (-1+x)^r  \binom{k-1-w}{r} \binom{w-1+r}{t} y^{k-2-t}.
\end{equation}
Equating powers of $y$ in (\ref{qo1}), (\ref{qo2}) shows (\ref{ver}) and finishes the proposition's proof.
\end{proof}

As a consequence of Proposition \ref{zl} and (\ref{sigma}),
$$
\beta \bigl[ \Psi_2(s,w;l)\bigr] = (-1)^{k/2} \Psi_2(s,w;l).
$$
Since $\beta$, $\beta  \alpha  \beta$ generate $D_8$ we have verified (\ref{oi}) when $i=2$.

\section{Cohen's series representation} \label{cgt}

\subsection{}
In this section let $\G \subseteq \PSL_2(\R)$ be a Fuchsian group of the first kind, such as $\G_0(N)$, with fixed representatives for inequivalent cusps $\{\ca, \cb, \cc, \dots \}$. We  restrict our attention to the case where $\G$ has at least one cusp; the compact case will be similar.
The subgroup $\G_\ca$ of all
elements in $\G$ that fix $\ca$ is isomorphic to $\Z$.   There exists a scaling matrix $\sa
\in \SL_2(\R)$ so that $\sa \infty= \ca$ and
\begin{equation*}
  \sa^{-1} \G_\ca \sa =  \left\{ \left. \pm \begin{pmatrix} 1 & m \\ 0 & 1
\end{pmatrix}
\; \right| \; \ m\in {\Z}\right\}.
\end{equation*}
See \cite{Iwsp} for further details. Define the series
\begin{equation}\label{omega}
\Omega_\ca(z,\tau;s,k):= \sum_{\g \in \G} \frac 1 {(\sa^{-1}\g z-\overline{\tau})^{s} j(\sa^{-1}\g,z)^k}
\end{equation}
for $z \in \H$ and $ \tau \in \H \cup \R$. Special
cases of this series have been considered by Petersson for $\tau \in \H$ and $s=k$ in \cite[p 56]{Pe} (also by Zagier  \cite{La} in his proof of the Eichler-Selberg trace formula), and by Cohen for $\tau=0$ and integral $s$ between 2 and $k-2$, see \cite[p 204]{KZ}.
We will see in (\ref{connect}) below that (\ref{omega}) gives a series representation for $\doh_k(z,s)$.

\subsection{Convergence}

\begin{prop} \label{convergence}
The series $\Omega_\ca(z,\tau;s,k)$ defined by (\ref{omega}) is absolutely convergent
\begin{enumerate}
\item for $1<\sigma$ when $\tau \in \H$,
\item for $1<\sigma <k/2$ when $\tau \in \R$,
\item for $1<\sigma < k-1$ when $\sa \tau$ is a cusp of $\G$.
\end{enumerate}
In all cases, the convergence is uniform for $\sigma$ in compact subsets.
\end{prop}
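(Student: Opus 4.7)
My plan is to decompose $\G$ by the $\G_\ca$-cosets (and in case (iii) by the double cosets $\G_\ca \backslash \G / \G_\cb$, with $\cb := \sa\tau$), so that the sum becomes an Eisenstein-type outer sum over cosets weighted by an inner sum that is a classical one-variable Dirichlet-type series. Writing $\sa^{-1}\g = \left(\begin{smallmatrix} a & b \\ c & d\end{smallmatrix}\right)$, the identity $(\sa^{-1}\g z - \overline\tau)\cdot j(\sa^{-1}\g,z) = (a-c\overline\tau)z + (b-d\overline\tau)$ cleanly rewrites the summand. Because $\sa^{-1}\g z - \overline\tau$ always lies in a single half-plane (its imaginary part is bounded below by $\Im\tau$ in case (i) and equals the positive number $\Im(\sa^{-1}\g z)$ in cases (ii)-(iii)), its argument stays in a compact subset of $(-\pi,\pi)$ and the branch factor in $(\sa^{-1}\g z - \overline\tau)^s$ is bounded uniformly in $\g$. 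Thus the absolute convergence of $\Omega_\ca$ is equivalent to that of $\sum_\g |\sa^{-1}\g z - \overline\tau|^{-\sigma}\,|j(\sa^{-1}\g, z)|^{-k}$, and uniformity in $\sigma$ on compacta will follow once the estimates below are seen to be locally uniform.

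The first reduction is the coset decomposition $\G = \bigsqcup_{\bar\g\in \G_\ca\backslash\G}\G_\ca \bar\g$. Using $\sa^{-1}\G_\ca\sa = \langle \pm T\rangle$, the elements of a fixed coset satisfy $\sa^{-1}\g z = \sa^{-1}\bar\g z + m$ for some $m\in\Z$, while $j(\sa^{-1}\g,z)$ depends only on $\bar\g$. Writing $w := \sa^{-1}\bar\g z$ and $J := j(\sa^{-1}\bar\g,z)$, the absolute series becomes
$$\sum_{\bar\g \in \G_\ca\backslash\G}\frac{1}{|J|^k}\sum_{m\in\Z}\frac{1}{|w + m - \overline\tau|^\sigma}.$$
With $Y := \Im(w - \overline\tau) > 0$, the inner sum converges for $\sigma > 1$ (this gives the common lower bound) and, by comparison with $\int_{-\infty}^\infty (t^2+Y^2)^{-\sigma/2}\,dt = c_\sigma Y^{1-\sigma}$, satisfies $\sum_m \ll_\sigma Y^{1-\sigma}$.

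In case (i), $Y = \Im w + \Im\tau \geq \Im\tau > 0$ is uniformly bounded below, so the inner sum is $O_\sigma(1)$ and the outer sum reduces to $\sum_{\bar\g}|J|^{-k}$, convergent by the classical theory of weight-$k$ Eisenstein series. In case (ii), $Y = \Im w = \Im z / |J|^2$ may tend to $0$; substituting $\sum_m \ll Y^{1-\sigma} = (\Im z)^{1-\sigma}|J|^{2(\sigma-1)}$ transforms the outer sum into
$$(\Im z)^{1-\sigma}\sum_{\bar\g\in\G_\ca\backslash\G}|J|^{-(k+2-2\sigma)},$$
which is Eisenstein-type and absolutely convergent precisely when $k+2-2\sigma > 2$, i.e.\ $\sigma < k/2$.

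Case (iii) is where the point $\tau$ is itself a cusp of $\sa^{-1}\G\sa$, so the values $w_{\bar\g}$ may accumulate at $\tau$ and the step just used is insufficient. I would replace the decomposition by the double coset decomposition $\G = \bigsqcup_{\g_{\mathrm{rep}}} \G_\ca \g_{\mathrm{rep}}\G_\cb$, and for each representative set $\tilde\g := \sa^{-1}\g_{\mathrm{rep}}\sb$ with $\sb$ the scaling matrix of $\cb$. Parametrize a general element $\g = \g_0 \g_{\mathrm{rep}}\g_1$ with $\g_0 \in \G_\ca$, $\g_1 \in \G_\cb$, so $\sa^{-1}\g = T^m \tilde\g T^n \sb^{-1}$ for $(m,n)\in\Z^2$, and apply the key identity $\sb u - \cb = -1/[\gamma(\gamma u + \delta)]$ (writing $\sb = \left(\begin{smallmatrix}\alpha&\beta\\ \gamma&\delta\end{smallmatrix}\right)$ so $\alpha/\gamma = \cb$) to compute $\sa^{-1}\g z - \overline\tau$ and $j(\sa^{-1}\g, z)$ explicitly in terms of $m, n$ and $\zeta := \sb^{-1}z$. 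After collecting factors, the $n$-sum takes the shape $\sum_n |\gamma(\zeta+n)+\delta|^{-(k-\sigma)}$, absolutely convergent for $k-\sigma > 1$, i.e.\ $\sigma < k-1$; the $m$-sum still demands $\sigma > 1$; and the outer sum over double-coset representatives is controlled by the standard convergence of the Kloosterman-type Eisenstein series attached to the pair $(\ca,\cb)$. The main obstacle will be the bookkeeping in case (iii): tracking the product of $j$-factors coming from both the $\G_\ca$- and $\G_\cb$-translations, handling the minor complication when $\ca = \cb$, and confirming the absolute convergence of the resulting triple sum in the claimed range. Uniformity in $\sigma$ on compact subsets of each stated interval will then follow since every estimate used is locally uniform in $\sigma$.
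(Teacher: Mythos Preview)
Your coset-decomposition approach is natural and genuinely different from the paper's (which replaces the lattice sum by an integral over $\H$ via a mean-value inequality for holomorphic functions, then bounds that integral directly). Case (i) is fine. The gap is the claim
\[
\sum_{m\in\Z}|w+m-\overline\tau|^{-\sigma}\ \ll_\sigma\ Y^{1-\sigma},\qquad Y=\Im(w-\overline\tau).
\]
Integral comparison only gives $\sum_m\le c_\sigma Y^{1-\sigma}+Y^{-\sigma}$, and the peak term $Y^{-\sigma}$ is genuinely present whenever $\Re(w-\overline\tau)$ is close to an integer: with that distance equal to $0$ and $Y\to 0$ one has $\sum_m=Y^{-\sigma}+2\zeta(\sigma)+o(1)\asymp Y^{-\sigma}$, not $Y^{1-\sigma}$. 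The lattice spacing $1$ is much larger than the scale $Y$ here, so the sum is dominated by a single term, not by the integral. You have no control over $\Re(w_{\bar\g}-\tau)\bmod 1$ as $\bar\g$ runs over $\G_\ca\backslash\G$, so the worst case must be used.

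In case (ii) this degrades your outer sum to $(\Im z)^{-\sigma}\sum_{\bar\g}|J|^{2\sigma-k}$, an Eisenstein series convergent only for $\sigma<k/2-1$, not the claimed $\sigma<k/2$; the same peak-term issue resurfaces in the $m$-sum of your case-(iii) sketch. The paper avoids this loss because after smoothing each term over a hyperbolic ball the relevant quantity is $\int_0^T\!\int_{\R}v^{k/2-2}\bigl((u-\tau)^2+v^2\bigr)^{-\sigma/2}\,du\,dv$, and the honest $u$-integral yields $c_\sigma v^{1-\sigma}$, giving the full range in (ii); a separate geometric lemma (orbit points stay outside horoballs tangent at cusps) then recovers the extra range in (iii). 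Your route might be repaired by treating the nearest-lattice-point contribution separately and summing it over $\bar\g$ with additional input on how the orbit distributes near $\tau$, but that is not a one-line fix.
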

\begin{proof}
First note that $|j(\g,z)|^{-2}=\Im(\g z)/y$. Also
$$
|z^{-s}| \leqslant e^{\pi t}|z|^{-\sigma}
$$
for $z \in \H$  by (\ref{zs}).  Consequently
\begin{equation}\label{rgt}
j(\sb, z)^{-k}\Omega_\ca(\sb z,\tau;s,k) \ll  y^{-k/2} \sum_{\g \in \G} \frac {\Im(\sa^{-1}\g \sb z)^{k/2}} {|\sa^{-1}\g \sb z-\overline{\tau}|^{\sigma}}.
\end{equation}
To estimate the right side of (\ref{rgt}) with an integral, we use the following result
from \cite[(5.2)]{IOS1}, for example. For $h(z)$ holomorphic on $\H$ and $2 \lqs k \in
\R$,
$$
y^{k/2}|h(z)| \leqslant \frac{1}{c_{\varepsilon, k}}
\iint_{\B(z,\varepsilon)} \Im(w)^{k/2} |h(w)| \, d\mu w
$$
with $\B(z,\varepsilon)$ the hyperbolic ball
centered at $z$ of radius $\varepsilon$ and  $c_{\varepsilon, k}$ a constant depending only on
$\varepsilon$ and $k$.  Therefore
\begin{equation}\label{conv}
j(\sb, z)^{-k}\Omega_\ca(\sb z,\tau;s,k) \ll
\frac{y^{-k/2}}{c_{\varepsilon, k}} \sum_{\g \in \G}\iint_{\B(\sa^{-1}\g \sb
z,\varepsilon)} \frac {\Im(w)^{k/2}} {|w-\overline{\tau}|^{\sigma}} \, d\mu w.
\end{equation}
We may choose a radius $\varepsilon$ so that the balls $\B(\sa^{-1}\g \sb z,\varepsilon)$ are
disjoint for all $\g \in \G$, but  $\varepsilon$ will depend on $z$. It is simpler to fix $\varepsilon=1/2$, say, and note that (see \cite[(2.44)]{Iwsp})
$$
\#\{ \g \in \G : \rho(\g z,z)<1\} \ll y_\G(z)+1
$$
where we define   the  {\it invariant height} function
\begin{equation}\label{invht}
y_\G(z):=\max_\ca \max_{\g \in \G}\left(\Im( \sa^{-1}\g z)\right)
\end{equation}
as in \cite[Chapter 2]{Iwsp}. The larger $y_\G(z)$ is, the closer $z$ is to a cusp of $\G$. From \cite[Lemma A.1]{JO2} we have  the upper bound
\begin{equation}\label{ygz}
y_\G(\sb z) \leqslant (c_\G+ 1/c_\G)(y+1/y)
\end{equation}
with any cusp $\cb$. (For a lower bound, consult \cite[Lemma A.2]{JO2}.) Here, $c_\G$ is a positive constant depending only on $\G$ and our choice of inequivalent cusps. (For example, it is  $1/N$ for $\G=\G_0(N)$ with $N$ prime and cusps at $\ci$ and $0$.)
Hence, for all $\g \in \G$,
\begin{equation}\label{c2}
    \Im(\sa^{-1} \g \sb z) \leqslant (c_\G+ 1/c_\G)(y+1/y)
\end{equation}
from (\ref{invht}) and (\ref{ygz}).
Now if $w \in \B(z,1/2)$   then it is easy to verify that $\Im(w) <ey$.  Set
\begin{equation*}
      T(z,\G):=e(c_\G+ 1/c_\G)(y+1/y).
\end{equation*}
We have thus shown that
\begin{equation*}
    \bigcup_{\g \in \G} \B(\sa^{-1} \g \sb
z,1/2) \subseteq B:=\left\{w \in \H : \Im(w) < T(z,\G) \right\}
\end{equation*}
where each point is counted with multiplicity $\ll y+1/y$.
From
(\ref{conv}) we then have
\begin{equation}\label{inte}
j(\sb, z)^{-k}\Omega_\ca(\sb z,\tau;s,k) \ll y^{-k/2}(y+1/y) \iint_{B} \frac {\Im(w)^{k/2}} {|w -\overline{\tau}|^\sigma} \,
d\mu w .
\end{equation}

Let $\alpha+i \beta =-\overline{\tau}$. We consider three
cases.

\vskip 3mm
\noindent
{\bf Case (i).} If $\tau \in \H$ then so is $-\overline{\tau}$ and $\beta>0$. Recall the formula
\begin{equation}\label{integral}
    \int_{-\infty}^\infty \frac{dx}{(x^2+y^2)^{\sigma/2}} = \sqrt{\pi} \frac{\G\bigl((\sigma -1)/2\bigr)}{\G(\sigma/2)}\frac{1}{y^{\sigma -1}}
\end{equation}
for $\sigma >1$. Letting $w=u+iv$ on the right side of (\ref{inte}), we have
\begin{eqnarray*}
  y^{-k/2} (y+1/y)\int_0^{T(z,\G)} \int_{-\infty}^\infty \frac{v^{k/2 -2}}{\left( (\alpha +u)^2 + (\beta +v)^2\right)^{\sigma/2}} \, du dv & \ll & y^{-k/2} (y+1/y)\int_0^{T(z,\G)} \frac{v^{k/2 -2}}{(\beta +v)^{\sigma -1}} \,  dv \\
   & \ll & y^{-k/2} (y+1/y)\int_1^{T(z,\G)} v^{k/2 -1-\sigma} \,  dv \\
   & \ll & y^{-k/2}(y+1/y)^{k/2 -\sigma+1}
\end{eqnarray*}
provided $1<\sigma$ and $k>2$. We have arrived at the bound
\begin{equation}\label{case1}
    j(\sb, z)^{-k}\Omega_\ca(\sb z, \tau;s,k) \ll y^{1-\sigma}+y^{\sigma-k-1}
\end{equation}
with an implied constant depending only on $\tau, s, k$ and $\G$. Therefore, for $\tau \in \H$, (\ref{omega})
is absolutely convergent for $1<\sigma$. The convergence is uniform for $\sigma$ in compact sets.

\vskip 3mm
\noindent
{\bf Case (ii).} If $\tau  \in \R$ then $\beta =0$. A similar analysis to Case (i) above shows
(\ref{case1}) also holds
for $1< \sigma < k/2$ with an implied constant depending only on $\tau, s, k$ and $\G$.
Therefore, for $\tau  \in \R$, (\ref{omega})
is absolutely convergent for $1< \sigma < k/2$. The convergence is uniform for $\sigma$ in compact sets.

\vskip 3mm
\noindent
{\bf Case (iii).} If $\sa\tau$ is a cusp of $\G$ then we may write $\sa\tau=\delta \cc$ for some $\delta \in \G$ and $\cc$ one of our set of inequivalent cusps. We need  the following lemma which essentially says that, for each cusp of $\G$, points in a $\G$-orbit of $z$ must lie outside some disc in $\H$ that is tangent to $\R$ at that cusp.

\begin{lemma}\label{tech}
For all $\g \in \G$, $z \in \H$ and $\tau\in \R$ with $\sa\tau$ a cusp  of $\G$, we have
\begin{equation}\label{c3}
\frac{\Im( \sa^{-1} \g \sb z) }{ \left|   \sa^{-1} \g \sb z  - \tau \right|^2} \ll y+1/y
\end{equation}
where the implied constant depends on $\G$ and $\tau$ alone.
\end{lemma}
\begin{proof}
We have  $\delta^{-1}  \g \in \G$ and by (\ref{c2})
\begin{equation}\label{im1}
\Im(\sc^{-1} (\delta^{-1} \g) \sb z)  \ll (y+1/y)
\end{equation}
with an implied constant depending only on $\G$. Also
\begin{eqnarray}
  \Im(\sc^{-1} \delta^{-1}  \g \sb z) &=& \Im(\sc^{-1} \delta^{-1} \sa \cdot \sa^{-1} \g \sb z) \nonumber \\
   &=&  \Im(\left(\smallmatrix * & * \\ c & d \endsmallmatrix \right)  \sa^{-1} \g \sb z) \label{im2}
\end{eqnarray}
on labelling $\sc^{-1} \delta^{-1} \sa$ as $\left(\smallmatrix * & * \\ c & d \endsmallmatrix \right)$. We have
$$
\left(\smallmatrix d & * \\ -c & * \endsmallmatrix \right) = (\sc^{-1} \delta^{-1} \sa)^{-1} = \sa^{-1} \delta \sc
$$
so that
\begin{equation}\label{dc}
\tau = \sa^{-1} \delta \cc = \sa^{-1} \delta \sc \ci = -d/c.
\end{equation}
Since $\tau \in \R$, (\ref{dc}) implies $c \neq 0$. Hence
\begin{eqnarray}
  \Im(\left(\smallmatrix * & * \\ c & d \endsmallmatrix \right)  \sa^{-1} \g \sb z)  &=& \frac{\Im(  \sa^{-1} \g \sb z) }{\left|c \cdot \sa^{-1} \g \sb z +d \right|^2} \nonumber \\
   &=& \frac{\Im( \sa^{-1} \g \sb z) }{\left|c\right|^2 \left|   \sa^{-1} \g \sb z  - \tau \right|^2}. \label{im3}
\end{eqnarray}
It follows from (\ref{im1}), (\ref{im2}) and (\ref{im3}) that
$$
\frac{\Im( \sa^{-1} \g \sb z) }{\left|c\right|^2 \left|   \sa^{-1} \g \sb z  - \tau \right|^2} \ll y+1/y
$$
and, since $c$ depends only on $\G$ and the choice of cusps and $\tau$, the proof of the lemma is complete.
\end{proof}

Now  $z' \in \B(z,\varepsilon)$ if and only if $\sa^{-1} \g \sb z' \in \B(\sa^{-1} \g \sb z,\varepsilon)$.
Also, for $z=x+iy$, $z'=x'+iy'$ we see  that $z' \in \B(z,\varepsilon)$ implies $y'+1/y' < e(y+1/y)$ for $\varepsilon <1$. Thus, if we replace $z$ in (\ref{c3}) by $z'\in \B(z,\varepsilon)$ we find
$$
\frac{\Im( \sa^{-1} \g \sb z') }{ \left|   \sa^{-1} \g \sb z'  - \tau \right|^2} \ll y+1/y.
$$
Hence $w \in \B(\sa^{-1} \g \sb z,\varepsilon)$ implies
\begin{equation}\label{bprime}
\frac{1}{|w-\tau|^2} \ll \frac{y+1/y}{\Im(w)}
\end{equation}
for an implied constant depending only on $\G$ and $\tau$. Let $B'$ be the elements $w$ of $B$ that also satisfy (\ref{bprime}). For $1< \sigma$ choose $r$ satisfying $1<r<\sigma$. From (\ref{inte}) we obtain
\begin{eqnarray*}
j(\sb, z)^{-k}\Omega_\ca(\sb z, \tau;s,k) & \ll & y^{-k/2} (y+1/y)\iint_{B'} \frac {\Im(w)^{k/2}} {|w-\tau|^\sigma} \, d\mu w  \\
   &=& y^{-k/2} (y+1/y)\iint_{B'} \frac {\Im(w)^{k/2}} {|w-\tau|^r |w-\tau|^{\sigma-r}} \, d\mu w  \\
   & \ll & y^{-k/2} (y+1/y)\int_0^{T(z,\G)} \int_{-\infty}^\infty \frac{v^{k/2 -2 - (\sigma-r)/2} (y+1/y)^{(\sigma-r)/2}}{\left( (\alpha +u)^2 + v^2\right)^{r/2}} \, du dv \\
   & \ll & y^{-k/2} (y+1/y)^{(\sigma-r)/2+1} \int_0^{T(z,\G)} v^{(k-\sigma -r)/2 -1} \, dv.
\end{eqnarray*}
Thus, for $\sigma< k-r$ we conclude that
\begin{equation}\label{case3}
    j(\sb, z)^{-k}\Omega_\ca(\sb z, \tau;s,k) \ll y^{1-r}+y^{r-k-1}.
\end{equation}
Therefore, for $\sa\tau$ a cusp, (\ref{omega})
is absolutely convergence for $1< \sigma < k-1$. The convergence is uniform for $\sigma$ in compact sets.
\end{proof}

\begin{prop} \label{gencusp}
The series $\Omega_\ca(z,\tau;s,k)$, as a function of $z$, is
 in $S_k(\G)$  when $s$ is in the  domain of absolute convergence corresponding to $\tau$ described in Proposition \ref{convergence} (and $\sigma<k+1$ in case (i)).
\end{prop}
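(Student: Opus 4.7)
The plan is to verify the three defining properties of a cusp form for $\Omega_\ca(z,\tau;s,k)$: holomorphy in $z$, weight $k$ modularity under $\G$, and vanishing at every cusp.

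For \emph{holomorphy}, observe that in all three cases considered in Proposition \ref{convergence} one has $\Im(\sa^{-1}\g z-\overline{\tau})>0$, so each term $(\sa^{-1}\g z-\overline{\tau})^{-s} j(\sa^{-1}\g,z)^{-k}$ is a well-defined holomorphic function of $z \in \H$ via the principal branch (\ref{zs}). The estimates produced in the proof of Proposition \ref{convergence} depend on $z$ only through $y+1/y$, and so they are locally uniform in $z$ on compact subsets of $\H$. Thus Weierstrass's theorem yields holomorphy of the sum.

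For \emph{modularity}, I would carry out the standard $\g\mapsto \g\delta^{-1}$ trick. Fix $\delta\in\G$; using the cocycle identity $j(\sa^{-1}\g,\delta z)\, j(\delta,z)=j(\sa^{-1}\g\delta,z)$, valid in $\SL_2(\R)$, one computes
\begin{equation*}
\Omega_\ca(\delta z,\tau;s,k)=\sum_{\g\in\G}\frac{j(\delta,z)^k}{(\sa^{-1}\g\delta z-\overline{\tau})^s\, j(\sa^{-1}\g\delta,z)^k}.
\end{equation*}
Absolute convergence (Proposition \ref{convergence}) permits the substitution $\g\mapsto \g\delta^{-1}$, returning $j(\delta,z)^k\,\Omega_\ca(z,\tau;s,k)$, so $\Omega_\ca$ transforms with weight $k$.

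For \emph{cuspidality}, I would invoke the pointwise bounds obtained along the way in the proof of Proposition \ref{convergence}. At any cusp $\cb$ with scaling matrix $\sb$, estimate (\ref{case1}) gives, in cases (i) and (ii),
\begin{equation*}
j(\sb,z)^{-k}\Omega_\ca(\sb z,\tau;s,k)\ll y^{1-\sigma}+y^{\sigma-k-1},
\end{equation*}
while in case (iii), estimate (\ref{case3}) gives the analogous bound with $\sigma$ replaced by some $r$ with $1<r<\sigma$. Cuspidality at $\cb$ is the vanishing of this quantity as $y\to\infty$, which requires both exponents to be negative: $\sigma>1$ (granted in all three cases) together with $\sigma<k+1$ (the hypothesis explicitly added in case (i), and automatic from $\sigma<k/2$ in case (ii) and $\sigma<k-1$ in case (iii)). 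Since $\cb$ was arbitrary, $\Omega_\ca(\cdot,\tau;s,k)\in S_k(\G)$.

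The only non-routine step is the first one, and even there the work is already done inside Proposition \ref{convergence}: one just needs to notice that its estimates are locally uniform in $z$, not merely pointwise. Everything else is bookkeeping with the cocycle relation and rereading the three decay bounds at the cusps.
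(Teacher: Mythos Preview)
Your proof is correct and follows essentially the same approach as the paper: holomorphy from locally uniform convergence, weight $k$ from the cocycle relation, and cuspidality from the decay bounds (\ref{case1}) and (\ref{case3}). The only cosmetic difference is that the paper makes the Fourier step explicit---writing $j(\sb,z)^{-k}\Omega_\ca(\sb z,\tau;s,k)=\sum_{m\in\Z} a_{\ca\cb}(m)e^{2\pi i m z}$, integrating against $e^{-2\pi i m z}$ along $[iY,1+iY]$, and letting $Y\to\infty$ to kill $a_{\ca\cb}(m)$ for $m\leqslant 0$---whereas you simply invoke the equivalent fact that polynomial decay of a holomorphic, $x$-periodic function as $y\to\infty$ forces the non-positive Fourier modes to vanish.
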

\begin{proof}
It is clear that $\Omega_\ca(z,\tau;s,k)$ is a holomorphic function of $z$, since the convergence in Proposition \ref{convergence} is uniform. That it is weight $k$ in $z$ is easily verified. It only remains to check that it decays as $z$ approaches each cusp $\cb$. To this end we consider
\begin{equation}\label{fab}
  j(\sb, z)^{-k}\Omega_\ca(\sb z,\tau;s,k).
\end{equation}
Verifying that (\ref{fab}) is invariant as $z \to z+1$, it must have the  Fourier expansion
\begin{equation*}
  j(\sb, z)^{-k}\Omega_\ca(\sb z, \tau;s,k)=\sum_{m \in \Z} a_{\ca \cb}(m) e^{2 \pi i m z}.
\end{equation*}
With (\ref{case1}) and (\ref{case3}) we find
$$
a_{\ca \cb}(m) =\int_{iY}^{1+iY} j(\sb, z)^{-k}\Omega_\ca(\sb z, \tau;s,k) e^{-2\pi i m z} \, dz \ll Y^{-A} e^{2\pi m Y}
$$
for some $A>0$. Thus, letting $Y \to \infty$, we see that $a_{\ca \cb}(m) =0$ for $m \leqslant 0$. This completes the proof that $\Omega_\ca(z,\tau;s,k)$ is a cusp form.
\end{proof}

\subsection{Analytic Continuation}
\subsubsection{Continuation to a left half-plane} \label{abc}
We review next some results that we will require on the symmetrized
Hurwitz zeta function
\begin{equation}\label{hurw2}
    \zeta_\Z(z,s):=\sum_{n \in \Z} \frac{1}{(z+n)^s}.
\end{equation}
We shall only be concerned with $z \in \H$. Clearly (\ref{hurw2}) is absolutely convergent for $\Re(s)>1$.  Using Lipschitz
summation we have, as in \cite[Thm. 1]{KR},
\begin{equation}\label{lip}
\zeta_\Z(z,s)=\frac{(2\pi)^s }{e^{s i \pi/2} \G(s)}
\sum_{n=1}^\infty n^{s-1} e^{2 \pi i n z}.
\end{equation}
It is clear that (\ref{lip}) is now an analytic function of $s$ for  all $s \in \C$, extending the definition of   $\zeta_\Z(z,s)$.
Moreover, with elementary estimates on (\ref{lip}) we obtain
\begin{equation}\label{lip2}
\zeta_\Z(z,s) \ll \begin{cases} e^{-2 \pi
y}\left(1+y^{-\sigma}\right) & \text{ if \ \ } \sigma \neq 0  \\
e^{-2 \pi
y}\left(1+|\log y |\right) & \text{ if \ \ } \sigma =0,
\end{cases}
\end{equation}
for all $s \in \C$, $z \in \H$ and an implied constant depending only on $s$.

\vskip 3mm
Rearranging the absolutely convergent
(\ref{omega}), we have
\begin{eqnarray}
\Omega_\ca(z,\tau;s,k) & = & \sum_{\g \in \G_\ca \backslash \G} \sum_{n \in \Z} \frac 1 {(\sa^{-1} \g z+n-\overline{\tau})^{s} j(\sa^{-1}\g,z)^k} \nonumber\\
   &=& \sum_{\g \in \G_\ca \backslash \G}  \frac{\zeta_{\Z}(\sa^{-1}\g z-\overline{\tau}, s)}{j(\sa^{-1}\g,z)^k} \label{hz}
\end{eqnarray}
for all $s$ with $1 < \sigma <k/2$. But from (\ref{lip2}) we have
\begin{eqnarray*}
\sum_{\g \in \G_\ca \backslash \G}  \frac{\zeta_{\Z}(\sa^{-1}\g z -\overline{\tau}, s)}{j(\sa^{-1}\g,z)^k} & \ll & y^{-k/2}\sum_{\g \in \G_\ca \backslash \G} \left(\Im(\sa^{-1}\g z)^{k/2 - \sigma}+ \Im(\sa^{-1}\g z)^{k/2}\right)\\
   &=&  y^{-k/2} \left( E_\ca(z, k/2 - \sigma) + E_\ca(z,k/2) \right)
\end{eqnarray*}
provided $k/2 - \sigma>1$ and $k/2 >1$. The Eisenstein series $E_\ca(z,s)$ are absolutely convergent for $\sigma>1$, as in \cite{Iwsp}.  Thus we see that the right hand side of (\ref{hz}) converges to an analytic function of $s$ for $\sigma< k/2-1$. If $k \gqs 6$ then $\sigma< k/2-1$ overlaps with $1<\sigma$ and we have shown that $\Omega_\ca(z,\tau;s,k)$ has an analytic continuation to a left half plane.

\subsubsection{Continuation to all of $\C$}

Now let $f(z) \in S_k(\G)$ have Fourier expansion at the cusp $\ca$
$$
j(\sa,z)^{-k} f(\sa z) = \sum_{m=1}^\infty
a_\ca(m) e^{2\pi i m z}
$$ and for $\mu \in \H \cup \R$ define
$$
L_\ca(f,s;\mu):=\sum_{m=1}^\infty \frac{a_\ca(m) e^{2\pi i m \mu}}{m^{s}}.
$$
From the  bound $a_\ca(m) \ll m^{(k-1)/2}$ on average, as in \cite[Corollary
5.2]{Iwto},
we see that $L_\ca(f,s;\mu)$ is absolutely convergent for $\Re(s)>(k+1)/2$.
Also
\begin{equation}\label{star}
L^*_\ca(f,s;\mu):=(2\pi)^{-s} \G(s) L_\ca(f,s;\mu) = \int_0^\infty (f|_k \sa)(iy+\mu) y^{s-1} \, dy
\end{equation}
which is analytic for $s$ in all of $\C$ for $\Im(\mu)>0$ or $\sa \mu$ a cusp of $\G$.

\begin{prop} \label{inn}
For $1< \sigma < k/2-1$ we have
$$
\s{\Omega_\ca( \cdot ,\tau;s,k)}{f}=2^{2-k} \pi e^{-s i \pi/2}  \frac{\G(k-1)}{\G(s)\G(k-s)} L^*_\ca(\overline{f},k-s;-\overline{\tau}).
$$
\end{prop}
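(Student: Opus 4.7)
The plan is a direct unfolding argument using the Hurwitz-zeta representation (\ref{hz}). Since the hypothesis $1<\sigma<k/2-1$ places us in the overlap region identified in subsection \ref{abc}, I may replace $\Omega_\ca(z,\tau;s,k)$ by
$$
\sum_{\g\in\G_\ca\backslash\G}\frac{\zeta_\Z(\sa^{-1}\g z-\overline\tau,s)}{j(\sa^{-1}\g,z)^k}
=\sum_{\g\in\G_\ca\backslash\G}\phi(\g z)j(\g,z)^{-k},\qquad
\phi(w):=\frac{\zeta_\Z(\sa^{-1}w-\overline\tau,s)}{j(\sa^{-1},w)^k},
$$
where the cocycle $j(\sa^{-1}\g,z)=j(\sa^{-1},\g z)j(\g,z)$ rewrites each summand as the weight-$k$ slash of the $\G_\ca$-invariant prototype $\phi$ (invariance follows from the periodicity of $\zeta_\Z$ in its first argument together with $\sa^{-1}\G_\ca\sa=\langle T\rangle$). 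The standard unfolding then gives
$$
\bigl\langle\Omega_\ca(\cdot,\tau;s,k),f\bigr\rangle
=\int_{\G_\ca\backslash\H} y^k\,\phi(z)\,\overline{f(z)}\,d\mu z.
$$

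Next I substitute $z=\sa w$, with $w=u+iv$. The identity $j(\sa^{-1},\sa w)=1/j(\sa,w)$ together with the weight-$k$ transformation law of $f$ shows that $y^k\phi(\sa w)\overline{f(\sa w)}=v^k\zeta_\Z(w-\overline\tau,s)\overline{(f|_k\sa)(w)}$, so the integral becomes
$$
\int_0^\infty\!\!\int_0^1 v^{k-2}\,\zeta_\Z(w-\overline\tau,s)\,\overline{(f|_k\sa)(w)}\,du\,dv.
$$
Now I insert the Lipschitz expansion (\ref{lip}) for $\zeta_\Z(w-\overline\tau,s)$ and the Fourier expansion $(f|_k\sa)(w)=\sum_{m\gqs 1}a_\ca(m)e^{2\pi imw}$. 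The bound (\ref{lip2}) and the exponential decay of $\overline{f|_k\sa}$ at $\infty$ let me interchange sum and integral in the range $1<\sigma<k/2-1$. The $u$-integral collapses the double sum to the diagonal $m=n$, and the $v$-integral evaluates to $\G(k-1)/(4\pi n)^{k-1}$, leaving
$$
\bigl\langle\Omega_\ca(\cdot,\tau;s,k),f\bigr\rangle
=\frac{(2\pi)^s}{e^{si\pi/2}\G(s)}\cdot\frac{\G(k-1)}{(4\pi)^{k-1}}\sum_{n=1}^\infty\frac{\overline{a_\ca(n)}\,e^{-2\pi in\overline\tau}}{n^{k-s}}.
$$

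The remaining series is precisely $L_\ca(\overline f,k-s;-\overline\tau)$; converting to the completed form via (\ref{star}) supplies a factor $(2\pi)^{k-s}/\G(k-s)$, and combining $(2\pi)^k/(4\pi)^{k-1}=2^{2-k}\pi$ yields the stated constant $2^{2-k}\pi\, e^{-si\pi/2}\G(k-1)/(\G(s)\G(k-s))$. The only genuine technical point is the triple interchange of summations and integrals in the displayed computation, and this is exactly what the strip $1<\sigma<k/2-1$ was arranged to accommodate; bookkeeping the powers of $2$ and $\pi$ is routine but needs care to recognize the final prefactor.
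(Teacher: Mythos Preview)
Your proof is correct and follows essentially the same route as the paper: unfold the $\G_\ca\backslash\G$ sum using the Hurwitz-zeta representation (\ref{hz}), pass to the strip $0<u<1$ via the scaling matrix, insert the Lipschitz expansion (\ref{lip}) and the Fourier expansion of $f|_k\sa$, and evaluate the resulting $u$- and $v$-integrals. You are somewhat more explicit than the paper about the cocycle manipulation and the justification of the interchange, but the argument is the same.
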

\begin{proof}
After unfolding we have
\begin{eqnarray*}
  \s{\Omega_\ca( \cdot ,\tau;s,k)}{f} &=& \int_0^\infty \int_0^1  y^{k-2}   \zeta_\Z (z-\overline{\tau},s) \overline{j(\sa,z)}^{-k} \overline{f(\sa z)} \, dx dy \\
   &=& \frac{(2\pi)^s }{e^{s i \pi/2} \G(s)} \int_0^\infty \int_0^1
   y^{k-2} \left( \sum_{m=1}^\infty
m^{s-1} e^{2 \pi i m (z-\overline{\tau})} \right)\left( \sum_{n=1}^\infty \overline{a_\ca(n)} e^{-2 \pi i n\overline z}
\right) \, dx dy \\
   &=& \frac{(2\pi)^s }{e^{s i \pi/2} \G(s)} \int_0^\infty
  y^{k-2} \left( \sum_{m=1}^\infty
 m^{ s-1} \overline{a_\ca(m)} e^{-4 \pi  m y} e^{-2\pi i m \overline{\tau}} \right) \, dy \\
   &=& \frac{(2\pi)^s }{e^{s i \pi/2} \G(s)} \frac{\G(k-1)}{(4 \pi)^{k-1}} L_\ca(\overline{f},k-s;-\overline{\tau})
\end{eqnarray*}
and (\ref{star}) completes the proof.
\end{proof}

 Let $f_j$ with $1 \leqslant j \leqslant n$ be an
orthonormal basis for $S_k(\G)$. We find
\begin{eqnarray}
  \Omega_\ca( z ,\tau;s,k) &=& \sum_j \s{\Omega_\ca( \cdot ,\tau;s,k)}{f_j} f_j(z) \nonumber \\
   &=& 2^{2-k} \pi e^{-s i \pi/2}  \frac{\G(k-1)}{\G(s)\G(k-s)} \sum_j L^*_\ca(\overline{f_j},k-s;-\overline{\tau})
   f_j(z). \label{cont2}
\end{eqnarray}
Thus (\ref{cont2}) gives  the continuation of
$\Omega_\ca( z ,\tau;s,k)$ to all $s \in \C$, except in the case when $\tau \in \R$ and $\sa \tau$ is not a cusp. We have proved

\begin{theorem} \label{nine}
Let $k \gqs 6$. The series $\Omega_\ca( z ,\tau;s,k)$, originally defined by (\ref{omega}) for at least $1<\Re(s)<k/2$, has a meromorphic continuation to all $s \in \C$ for  $\tau \in \H$ or  $\tau \in \R$ and $\sa \tau$ a cusp of $\G$. In the case that $\tau \in \R$ and $\sa \tau$ is not a cusp we only have the continuation for $\Re(s)<k/2$. In all cases $\Omega_\ca( z ,\tau;s,k)$ is a cusp form in $z$.
\end{theorem}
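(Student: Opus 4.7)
The plan is to extend the spectral identity (\ref{cont2}), which we know only on the narrow strip $1<\sigma<k/2-1$, to the claimed meromorphic continuation. I would fix an orthonormal basis $\{f_j\}$ of $S_k(\G)$; the assumption $k\gqs 6$ makes the strip $1<\sigma<k/2-1$ non-empty, and there Proposition \ref{gencusp} places $\Omega_\ca(\cdot,\tau;s,k)$ in $S_k(\G)$ while Proposition \ref{inn} computes its coefficients, producing (\ref{cont2}).

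I would then read the right-hand side of (\ref{cont2}) as defining $\Omega_\ca(z,\tau;s,k)$ for arbitrary $s$. Unpacking $L^*_\ca(\overline{f_j},k-s;-\overline\tau)=(2\pi)^{s-k}\G(k-s)L_\ca(\overline{f_j},k-s;-\overline\tau)$ and cancelling $\G(k-s)$ against the denominator, the right-hand side becomes $1/\G(s)$ times a finite sum of cusp forms $f_j(z)$ weighted by the Dirichlet series $L_\ca(\overline{f_j},k-s;-\overline\tau)$. Since $1/\G(s)$ is entire, the meromorphic behaviour in $s$ is controlled entirely by these Dirichlet series. When $\tau\in\H$, the exponentials $e^{-2\pi i m\overline\tau}$ decay rapidly in $m$, so $L_\ca$ is entire and we obtain continuation to all of $\C$. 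When $\sa\tau$ is a cusp $\delta\cb$ of $\G$, the integral representation (\ref{star}) yields the same conclusion after using the modular transformation $\sa^{-1}\delta\sb$ to convert the behaviour of $\overline{f_j}|_k\sa$ near $-\overline\tau\in\R$ into the Fourier expansion of $\overline{f_j}|_k\sb$ near $\infty$, which decays exponentially because $f_j$ is a cusp form.

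In the remaining case $\tau\in\R$ with $\sa\tau$ not a cusp, no such exponential decay is available at $-\overline\tau$, so $L_\ca(\overline{f_j},k-s;-\overline\tau)$ cannot be continued past its original half-plane via (\ref{star}). Instead, I would combine the absolute convergence strip $1<\sigma<k/2$ from Proposition \ref{convergence}(ii) with the left-half-plane extension $\sigma<k/2-1$ from \S\ref{abc}; for $k\gqs 6$ these overlap and together cover $\Re(s)<k/2$, giving the restricted continuation claimed. In all three cases, the right-hand side of (\ref{cont2}) exhibits the continuation as a linear combination of the $f_j(z)\in S_k(\G)$ with coefficients meromorphic in $s$, so $\Omega_\ca(\cdot,\tau;s,k)$ remains a cusp form in $z$ throughout its domain.

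The main obstacle will be the cusp case: establishing entirety of $L_\ca(\overline{f_j},k-s;-\overline\tau)$ via (\ref{star}) requires careful passage through the scaling matrices $\sa$ and $\sb$, together with the cusp-form decay of $f_j$ at every cusp of $\G$. Everything else is essentially bookkeeping built on the results already established.
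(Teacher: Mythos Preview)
Your proposal is correct and follows essentially the same route as the paper: establish (\ref{cont2}) on the strip $1<\sigma<k/2-1$ via Propositions \ref{gencusp} and \ref{inn}, then use the analyticity of $L^*_\ca(\overline{f_j},k-s;-\overline\tau)$ (asserted just before Proposition \ref{inn} via the integral (\ref{star})) to continue to all of $\C$ when $\tau\in\H$ or $\sa\tau$ is a cusp, falling back on \S\ref{abc} together with Proposition \ref{convergence}(ii) for the non-cusp real case. Your sketch of the cusp case, pushing the integration contour in (\ref{star}) through the scaling matrices to exploit cuspidal decay at $\cb$, is exactly the content the paper leaves implicit in that one-line assertion.
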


\noindent
{\bf Remark.}
If we set $s=k$ in (\ref{omega}) and restrict to $\tau \in \H$ we obtain a series first considered by Petersson \cite{Pe}.
We have $\Omega_\ci(z,\tau;k,k)$ in $S_k(\G)$ both as a function of $z$ and of $\tau$.
By Proposition \ref{inn}
$$
\s{\Omega_\ci(\cdot ,\tau;k,k)}{f} =  (-1)^{k/2}2^{2-k} \pi /(k-1) \overline{f(\tau)}.
$$
so that $\Omega_\ci(z,\tau;k,k)$ is a reproducing kernel.
Hence
$$
\Omega_\ci(z ,\tau;k,k) = (-1)^{k/2}2^{2-k} \pi /(k-1) \sum_{j} f_j(z) \overline{f_j(\tau)}
$$
for any orthonormal basis $f_j$. The series $\Omega_\ci(z ,\tau;k,k)$ my also be recognized as the $0$th elliptic Poincar\'e series, see \cite{Pe}, \cite[p 260]{EG} or \cite[Section 4]{IOS1}.

\subsection{Cohen's kernel at general arguments}
We examine in more detail some special cases of Theorem \ref{nine}, including the connection with $\doh_k(z,s)$.
Let $\G=\G_0(N)$, the  Hecke congruence group of level $N$ for the remainder of this section. It has cusps at $\ci$ and $0$. These are $\G$-equivalent when $N=1$ and inequivalent otherwise. In either case set
$$
\coh_k(z,s) := \Omega_\ci(z, 0;s,k).
$$
With Theorem \ref{nine} we see that it is a cusp form for all $s\in \C$. For any cusp form $f$ we have
\begin{equation}\label{cop1}
    \s{\coh_k(\cdot,s)}{f}=2^{2-k} \pi e^{-s i \pi/2}  \frac{\G(k-1)}{\G(s)\G(k-s)} L^*(\overline{f},k-s).
\end{equation}
Comparing (\ref{cop1}) with (\ref{doh}) and using (\ref{feqqq9}) yields
\begin{equation}\label{connect}
\coh_k(z,s) = 2^{2-k}(-1)^{k/2} \pi e^{-s i \pi/2}  \frac{\G(k-1)}{\G(s)\G(k-s)} \doh_k(z,s)
\end{equation}
for $\G=\G_0(1)$.

\subsection{A functional equation} \label{funeq9}
 Let $\omega=\left(\begin{matrix} 0 & -1 \\ N & 0
\end{matrix}\right)$. We have $\G_0(N)= \omega^{-1} \G_0(N) \omega$.
As in \cite[p112]{Iwto} define the operator $W:S_k(\G_0(N)) \to
S_k(\G_0(N))$ by $W f = f|_k \omega$. We have $W^2f=f$ and $\s{Wf}{Wg}
= \s{f}{g}$ for all $f$, $g$ in $S_k(\G)$. Therefore $W$ is
self-adjoint and we may choose our orthonormal basis to be
eigenfunctions of $W$:
$$
Wf_j=\eta_j f_j
$$
for all $j$ with $\eta_j=\pm 1$ necessarily.

\begin{theorem} \label{fun}
For all $s \in \C$
\begin{equation}\label{fe2}
\coh_k(z,k-s) = e^{s i \pi} N^{k/2-s}  \coh_k( z, s)|_k \omega.
\end{equation}
\end{theorem}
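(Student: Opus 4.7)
The plan is to prove the identity directly from the series definition of $\coh_k(z,s)$, using only that $\omega$ normalizes $\G_0(N)$ together with the cocycle properties of $j$. By Theorem \ref{nine} both sides of (\ref{fe2}) are meromorphic in $s$, so it suffices to establish the identity in the strip $1 < \Re(s) < k-1$ where the defining series is absolutely convergent and rearrangements are permissible.

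Unwinding the slash operator gives
$$
\coh_k(z,s)|_k \omega
  = N^{-k/2} z^{-k} \coh_k(\omega z, s)
  = N^{-k/2} z^{-k}\sum_{\g \in \G_0(N)} \frac{1}{(\g\omega z)^s\, j(\g,\omega z)^k}.
$$
The substitution $\g \mapsto \omega \g \omega^{-1}$, which is a bijection of $\G_0(N)$, converts $\g \omega z$ to $\omega \g z$. Using the cocycle $j(AB,w)=j(A,Bw)\,j(B,w)$ valid on $\mathrm{GL}_2^+(\R)$, together with $j(\omega,w)=Nw$ and $j(\omega^{-1},w)=-w$, one computes
$$
j(\omega\g\omega^{-1},\omega z) \;=\; j(\omega,\g z)\, j(\g, z)\, j(\omega^{-1},\omega z) \;=\; N(\g z)\cdot j(\g,z)\cdot\Bigl(\tfrac{1}{z}\Bigr) \;=\; \frac{(\g z)\, j(\g,z)}{z/N}\cdot\frac{1}{1},
$$
so that $j(\omega\g\omega^{-1},\omega z)^{-k} = z^{k}(\g z)^{-k} j(\g,z)^{-k} N^{-k}$; I will tidy the bookkeeping of constants carefully in the write‑up.

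The one genuinely delicate step is the factor $(\omega\g z)^{-s}=(-1/(N\g z))^{-s}$. Since $\g z\in\H$, one checks that if $\g z=re^{i\theta}$ with $\theta\in(0,\pi)$, then $-1/(N\g z)=e^{i(\pi-\theta)}/(Nr)$ also lies in $\H$, and so the principal branch gives
$$
\log\!\left(\tfrac{-1}{N\g z}\right) \;=\; i\pi - \log(N\g z),
$$
whence $(\omega\g z)^{-s}=e^{-si\pi}\,N^{s}\,(\g z)^{s}$. Inserting this and the cocycle calculation into the sum yields
$$
\coh_k(\omega z, s) \;=\; e^{-si\pi}\,N^{s}\, z^{k}\sum_{\g\in\G_0(N)} (\g z)^{s-k}\, j(\g,z)^{-k} \;=\; e^{-si\pi}\, N^{s}\, z^{k}\, \coh_k(z,k-s),
$$
since the resulting exponent $-(k-s)$ on $\g z$ also lies in the convergence strip. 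Multiplying by $N^{-k/2}z^{-k}$ gives $\coh_k(z,s)|_k\omega = e^{-si\pi}N^{s-k/2}\coh_k(z,k-s)$, which is equivalent to (\ref{fe2}).

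The main obstacle is the branch‑of‑logarithm calculation for $(\omega\g z)^{-s}$: everything else is a formal manipulation of the series once absolute convergence is invoked. Note that this approach is independent of the spectral expansion (\ref{cont2}), so, as promised in the introduction, combining Theorem \ref{fun} with (\ref{cop1}) yields the functional equation $L^{*}(f,k-s)=(-1)^{k/2}L^{*}(f,s)$ (using $e^{-(k-s)i\pi/2}=e^{si\pi/2}e^{-ki\pi/2}$ and $e^{-ki\pi/2}=(-1)^{k/2}$ for even $k$) as an independent corollary.
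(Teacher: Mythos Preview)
Your approach is essentially identical to the paper's: both establish the identity in the strip $1<\Re(s)<k-1$ by reindexing the defining series via $\g\mapsto\omega\g\omega^{-1}$ (the paper uses the equivalent $\g\mapsto\omega^{-1}\g\omega$), invoke the branch computation $\log(-1/w)=i\pi-\log w$ for $w\in\H$, and then extend by analytic continuation. One bookkeeping slip to fix in your write-up: with $\omega^{-1}=\left(\begin{smallmatrix}0&1/N\\-1&0\end{smallmatrix}\right)$ you have $j(\omega^{-1},\omega z)=-\omega z=1/(Nz)$, not $1/z$, so $j(\omega\g\omega^{-1},\omega z)=\g z\cdot j(\g,z)/z$ with no factor of $N$; your stated $N^{-k}$ in $j(\omega\g\omega^{-1},\omega z)^{-k}$ is spurious, though your final line is correct.
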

\begin{proof}
Starting with the
original definition (\ref{cohen}) of $\coh_k(z,s)$, which we know is absolutely
convergent for $1 < \Re(s) <k-1$,
\begin{eqnarray*}
  \coh_k(z,s) &=& \sum_{\g \in \G} \frac{1}{(\omega^{-1} \g \omega z)^s j(
\omega^{-1} \g \omega,z)^k} \\
   &=& \sum_{\g \in \G} \frac{1}{\left(\frac{-1}{ N\g \omega z}\right)^s j( \omega^{-1},
\g \omega z)^k
   j(  \g ,\omega z)^k j( \omega,z)^k} \\
   &=& \frac{N^s e^{-s i \pi}}{j(\omega , z)^k} \sum_{\g \in \G}
   \frac{1}{(\g \omega z )^{k-s}
   j(  \g ,\omega z)^k} \\
   &=& N^{s-k/2} e^{-s i \pi} \coh_k( z,
   k-s)|_k \omega
\end{eqnarray*}
where we used
$$
\left(-1/z\right)^s = e^{s \log (-1/z)} = e^{s(i\pi -\log z)}=e^{s i\pi} \cdot e^{-s \log z} = e^{s i\pi} z^{-s}
$$
for $z \in \H$. The proof follows by analytic continuation.
\end{proof}

The functional equations for the $L$-functions $L(f_j,s)$ may be recovered easily from Theorem \ref{fun}. Simply apply (\ref{fe2}) to (\ref{cont2}) and equate coefficients of $f_j$ to get
\begin{equation}\label{lfun}
\left(\frac{\sqrt{N}}{2 \pi} \right)^s \G(s) L(f_j,s) = \eta_j  \ i^k
\left(\frac{\sqrt{N}}{2 \pi} \right)^{k-s} \G(k-s) L(f_j,k-s).
\end{equation}
See \cite[Thm.
7.2]{Iwto}, for example, for the standard proof of  (\ref{lfun}).

\bibliography{kernel}

\end{document}